\numberwithin{equation}{section}
\newcommand{\V}{\Vert}
\newcommand{\RR} {\mathbb R}
\newcommand{\CC} {\mathbb C}
\newcommand{\pa} {\partial}
\newcommand{\Cal} {\mathcal}
\newcommand{\beq} {\begin{equation}}
\newcommand{\eeq} {\end{equation}}
\newtheorem{theorem}{Theorem}[section]
\newtheorem{remark}[theorem]{ Remark}
\newtheorem{corollary}[theorem]{Corollary}
\newtheorem{proposition}[theorem]{Proposition}
\newtheorem{lemma}[theorem]{Lemma}
\newtheorem{definition}[theorem]{Definition}
\begin{document}
\title[Travelling waves for NLS and NLKG on Riemannian manifolds ]{ Nonlinear travelling waves on non-Euclidean spaces}

\author{Mayukh Mukherjee}
\thanks{The author was partially supported by NSF grant  DMS-1161620.}

\address{Max Planck Institute for Mathematics\\ Vivatsgasse 7\\ 53111 Bonn,
\\ Germany}

\email{mukherjee@mpim-bonn.mpg.de}
\subjclass[2010]{35J61, 35H20}
\begin{abstract} We study travelling wave solutions, that is, solutions of the form $v(t, x) = e^{i\lambda t}u(g(t)x)$, to nonlinear Schr{\"o}dinger and Klein-Gordon equations on Riemannian manifolds, both compact and non-compact ones, with emphasis on the NLKG. Here $g(t)$ represents a one-parameter family of isometries generated by a Killing field $X$ and a case of particular interest is when $X$ has length $\leq 1$, which leads in certain settings to hypoelliptic operators with loss of at least one derivative. In the compact case, we establish existence of travelling wave solutions via ``energy'' minimization methods and prove that at least compact isotropic manifolds have \emph{genuinely} travelling waves. We establish certain sharp regularity estimates on low dimensional spheres that improve results in ~\cite{T1} and carry out the subelliptic analysis for NLKG on spheres of higher dimensions utilizing their homogeneous coset space properties. Such subelliptic phenomenon have no parallel in the setting of flat spaces. We will also study related phenomenon on complete noncompact manifolds with certain symmetry assumptions using concentration-compactness type arguments.
\end{abstract}
\maketitle
\section{\bf Introduction, Setting and Notations} Let us consider a complete Riemannian manifold $M$. Let $X$ be a Killing field on the manifold, which flows by a one-parameter family of isometries $g(t)$ of $M$. 
%We will, in the presence of a boundary, in general assume a Dirichlet boundary condition on $\partial M$, though Neumann boundary conditions can also be tackled by similar methods. 
The following is the nonlinear Schr{\"o}dinger (NLS) equation:
\begin{eqnarray}\label{NLS}
i\partial_t v + \Delta v = - K|v|^{p - 1}v,
\end{eqnarray}
and the following is the nonlinear Klein-Gordon (NLKG) equation:
\begin{eqnarray}\label{NLKG}
\partial^2_t v - \Delta v + m^2 v = K|v|^{p - 1}v,
\end{eqnarray}
where in each case, $K > 0$ is a constant and $m \in \RR$.\newline
In this paper, we will investigate travelling wave solutions to both the NLS and the NLKG.
In the past, there has been a lot of investigation on travelling wave solutions to nonlinear Schr{\"o}dinger, Klein-Gordon and sine-Gordon equations. However, most of the literature focuses on travelling waves in an Euclidean setting: $(x, t) \in \RR \times [0, \infty)$ and their associated stability analysis. For example, see ~\cite{JMMP}, ~\cite{MJS}. In the setting $M = \RR^n$ and $g(t)x = x + tv$ for $x, v \in \RR^n$, such travelling waves have been studied in ~\cite{S} and ~\cite{BL}. \newline
As far as non-Euclidean settings are concerned, we must also mention recent interest in standing wave solutions (solutions of the form $v(t, x) = e^{i\lambda t}u(x)$) to (\ref{NLS}) and (\ref{NLKG}) in non-Euclidean settings. For example, see ~\cite{MS}, ~\cite{CM}, and ~\cite{CMMT}. To the best of our knowledge, the study of travelling waves on Riemannian manifolds was initiated in ~\cite{T1}. Our aim in this paper is to extend and build on the investigation started in ~\cite{T1}, using variants and modifications of techniques introduced in the aforementioned references, particularly ~\cite{MT} and ~\cite{CMMT}. We should also mention that the study in ~\cite{T1} focuses solely on compact manifolds. In this paper, we extend the investigation to select non-compact manifolds with certain symmetry assumptions. %symmetry, and the results are largely motivated by the investigation in ~\cite{CMMT} and ~\cite{MT}. %The tools and methods that we use in arriving at these non-compact results will be a hybrid of the latter two papers.

\subsection{Setting up the auxiliary equations and standing assumptions}
First, to fix notations, we define 
\begin{eqnarray}\label{Flx}
F_{\lambda, X} (u) = (-\Delta u - iXu + \lambda u, u),
\end{eqnarray}
\begin{eqnarray}\label{Fmlx}
F_{m, \lambda, X} (u) = (-\Delta u + X^2 u + 2i\lambda Xu + (m^2 - \lambda^2) u, u),
\end{eqnarray}
\begin{eqnarray}\label{Elx} 
E_{X}(u) = \frac{1}{2}(-\Delta u - iXu, u) - \frac{1}{p + 1}\int_M |u|^{p + 1}dM,
\end{eqnarray}
and
\begin{eqnarray}\label{Elx1} 
\mathcal{E}_{\lambda, X}(u) = \frac{1}{2}(-\Delta u + X^2u + 2i\lambda Xu, u) - \frac{1}{p + 1}\int_M|u|^{p + 1}dM.
\end{eqnarray}
In all of the above, and henceforth, $(u, v)$  denotes the $L^2$ inner product $(u, v) = \int_M u\overline{v} dM$.\newline
In general, if $F$ is an isometry of $M$ and we define $F^*u(x) = u(F(x))$, then it is known that the Laplacian $-\Delta$ commutes with $F^*$. Since $g(t)x$ flows by isometries, the Laplacian $-\Delta$ commutes with $g(t)^*$ for all $t$, that is,
\[
\Delta (u(g(t)x)) = (\Delta u)(g(t)x).
\] 
Using this, if we differentiate $v(t, x) = e^{i\lambda t} u(g(t)x)$ with respect to $t$, we get
\[
i\partial_t v = e^{i\lambda t}(-\lambda u(g(t)x) + iXu(g(t)x)),
\]
where, as mentioned before, $X$ is the Killing field flowing by $g(t)$. 
Thus, (\ref{NLS}) holds if and only if 
\begin{eqnarray}\label{NLSw}
-\Delta u + \lambda u - iXu = K|u|^{p - 1}u.
\end{eqnarray}
Differentiating $v(t, x) = e^{i\lambda t} u(g(t)x)$ twice with respect to $t$, we get
\[
\partial_t^2 v = e^{i\lambda t}(-\lambda^2u(g(t)x) + 2i\lambda Xu(g(t)x) + X^2u(g(t)x)).
\]
Thus, (\ref{NLKG}) holds if and only if 
\begin{eqnarray}\label{NLKGw}
-\Delta u + X^2 u +  2i\lambda Xu + (m^2 - \lambda^2) u = K|u|^{p - 1}u.
\end{eqnarray}
As we mentioned before, we assume that the Killing field $X$ is bounded, that is,
\begin{eqnarray}\label{Xb}
\langle X, X\rangle \leq b^2 < \infty, \text{   }b \in \RR.
\end{eqnarray}
On a complete manifold $M$, the Laplacian $-\Delta$ is essentially self-adjoint when defined on $C^\infty_c(M)$, and still calling $-\Delta$ the self-adjoint extension of the Laplacian, we can see that $iX$ is a small relatively bounded perturbation of $\Delta$ on which the Kato-Rellich theorem applies, which in turn 
means that $-\Delta - iX$ is self-adjoint. 
%(also, see Subsection \ref{CSAO}, Appendix C). 
This implies\label{sym28}
\begin{eqnarray}\label{spake1}
\mbox{Spec}(-\Delta - iX) \subset [\alpha, \infty), \alpha \in \RR.
\end{eqnarray}
Now, as regards (\ref{NLKGw}), by a similar logic as above, if we assume $\langle X, X\rangle \leq b^2 < 1$, we can see that $-\Delta + X^2$ is a strongly elliptic nonnegative semidefinite self-adjoint operator and $2i\lambda X$ is a relatively bounded perturbation of $-\Delta + X^2$ with relative bound $0$. Let us quickly justify this. We have,
 \begin{align*}
(i\alpha Xu, i\alpha Xu) & = 2\alpha^2(iXu, iXu) - \alpha^2(iXu, iXu) = 2\alpha^2(X.\nabla u, X.\nabla u) + \alpha^2 (X^2 u, u)\\
& \leq 2\alpha^2 b^2 (-\Delta u, u) + \alpha^2 (X^2 u, u) \leq \alpha^2 C(-\Delta u, u) + \alpha^2 C(X^2 u, u)\\
& = \alpha^2 C ((-\Delta + X^2)u, u) \leq \alpha^2 C \V (-\Delta + X^2)u\V \V u\V \\
& \leq \alpha^2CC'\V (-\Delta + X^2)u\V^2 + \alpha^2C\frac{1}{C'}\V u\V^2.
\end{align*}
This means that $-\Delta + X^2 + 2i\lambda X$ is self-adjoint, giving 
\begin{eqnarray}\label{spake2}
\mbox{Spec}(-\Delta + X^2 + 2i\lambda X) \subset [\beta(\lambda), \infty), \text{  } \beta(\lambda) \in \RR.
\end{eqnarray}
As long as we are concentrating on compact manifolds, (\ref{Xb}) is not a geometric restriction. We will also find the opportunity to say something about non-compact manifolds which have such bounded Killing fields later. Note, however, that all non-compact manifolds do not have to have bounded Killing fields. For example, rotate the parabola $y = x^2, z = 0$ about the $y$-axis in $\RR^3$. The only Killing fields of the resulting surface of revolution generate rotations about the $y$-axis and are not bounded. The hyperbolic space $\mathbb{H}^n$ provides another example of a non-compact manifold which has no bounded Killing fields. %A nice counterexample is the hyperbolic space: this can be realized by looking at the isometry group of the hyperbolic space and explicitly noting down the Killing fields.
\begin{remark}
Comparing the auxiliary equations (\ref{NLSw}) and (\ref{NLKGw}) we can now claim and justify a bias in our investigations towards the NLKG, which, as far as travelling waves are concerned, is harder to study because of the presence of the second order operator $X^2$ in (\ref{NLKGw}). Depending on the length of $X$, $-\Delta + X^2$ may be elliptic, subelliptic \footnote{A self-adjoint second order differential operator $\mathcal{L}$ is called subelliptic of order $\varepsilon$ $(0 < \varepsilon < 1)$ at $x \in M$ is there is a neighborhood $U$ of $x$ such that
\[
\V u\V^2_{H^\varepsilon} \leq C(|(\mathcal{L}u, u)| + \V u\V^2) \mbox{    }\forall\mbox{    } u \in C^\infty_0(U).
\]
See [F] for more details.}
or even hyperbolic. As an example, consider $\Delta = \pa^2_{x_1} +...+ \pa^2_{x_n}$ on the torus $\mathbb{T}^n$ and $X = \sqrt{2}\pa_{x_1}$. Then, $-\Delta + X^2 = \pa^2_{x_1} - \pa^2_{x_2} - ..... - \pa^2_{x_n}$. This demarcates a big point of deviation from the general methodology of ~\cite{CMMT} and ~\cite{MT}, wherein the auxiliary equations derived from (\ref{NLS}) and (\ref{NLKG}) by assuming standing wave solutions are always elliptic. %From the point of view of standing waves, there is no essential difference between the NLS and the NLKG (see (\ref{aux}) of Chapter 2 and the discussion thereafter).  %This is what makes the analysis difficult and interesting. 
It is also worthwhile to mention here that from the point of view of standing wave solutions, the auxiliary equations for both NLS and NLKG are absolutely similar, so there is no difference in the respective analyses. 
\end{remark}

\subsection{Outline of the paper}
Now we give a broad outline of the rest of this paper. As advertised before, we will study travelling wave solutions with bounded Killing fields $X$. ~\cite{T1} established the existence of such travelling wave solutions for (\ref{NLS}) and (\ref{NLKG}) on compact manifolds, by establishing the existence of minimizers of $F_{\lambda, X} (u)$ and $F_{m, \lambda, X} (u)$ respectively in the space $H^1(M)$ keeping the integral $\int_M |u|^{p + 1}dM$ constant. In Section \ref{EEM}, we establish the existence of constrained energy  minimizers, i.e., we minimize the energies $E_X(u)$ and $\mathcal{E}_{\lambda, X}(u)$ subject to the mass $\V u\V^2_{L^2}$ being constant and use usual variational arguments to show that these constrained minimizers actually give travelling wave solutions to (\ref{NLS}) and (\ref{NLKG}). These are respectively Proposition \ref{1.3} and Lemma \ref{1.2}.\newline
As the whole point of this investigation is to get travelling wave solutions, we must address the concern that the constrained\label{sym4} minimizers $u$ might not always satisfy $Xu \neq 0$. This is a legitimate concern, as constrained minimizers can even turn out to be constants. This concern is taken up in Section \ref{NS}, where it is shown that on fairly general spaces (compact isotropic manifolds) and for at least a non-empty set of parameters $\lambda$ and $m$, we have honest travelling wave solutions to (\ref{NLKGw}). To be precise, these are Theorem \ref{twim} (which generalizes Lemma 2.1 in ~\cite{T1}) and Theorem \ref{twim1}.\newline
So far, we have investigated (\ref{NLKGw}) under the assumption $\langle X, X\rangle < 1$. In Section \ref{SPS}, we first allow the length of the Killing field to equal $1$, albeit on a measure zero subset of $M$. Our setting is $M = S^n$ and $X = (x_1\pa_2 - x_2\pa_1)|_{S^n}$ is the restriction of a rotation vector field from $\RR^{n + 1}$. Here we extend the analysis on $S^2$ done in ~\cite{T1} to $S^n$ along somewhat similar lines of reasoning, with certain modifications, of course. The existence result for (\ref{NLKGw}) on $S^n$ is recorded in Proposition \ref{Pro}. We also improve on an estimate on $S^2$ (see (\ref{aaroaaro}) below) given in ~\cite{T1} and show that our estimate is sharp. These estimates are recorded in Subsection \ref{Mokhom} and Lemma \ref{Pro-op}.\newline
In Section \ref{CP}, we consider a particular situation where the Killing field $X$ has length $1$ everywhere. We investigate the resulting subLaplacian $-\Delta + X^2$ in the setting $M = S^7$ with reference to the contact structure available on $S^7$. The existence result for (\ref{NLKGw}) is given by Proposition \ref{esseven}. For the optimal value of $q_*$ mentioned in Proposition \ref{esseven}, refer to Subsection \ref{sevenop}, particularly Lemma \ref{sevenop1}. From the method of proof of Lemma \ref{sevenop1} it can be inferred as a corollary that the estimate in Proposition 4.2 in ~\cite{T1} is sharp.\newline
%So far in the paper, we have looked only at subsonic or sonic waves, in the sense that the Killing field $X$ has length less than or equal to 1. However, when $X$ has unrestricted length, we establish existence of constrained energy minimizers giving solutions to (\ref{NLSw}) and (\ref{NLKGw}) in Section \ref{SEM}.\newline
In Section \ref{NCR}, we establish our main theorems for this paper: existence of constrained $F_{m, \lambda, X}$ minimizers for (\ref{NLKGw}) and constrained $\Cal{E}_{\lambda, X}$ minimizers for (\ref{NLKGw}) in the non-compact setting. % We establish constrained $F_{m, \lambda, X}$ minimizers and constrained energy minimizers in this setting. 
These are respectively Theorem \ref{100} and Theorem \ref{1000}. Let us note here that among the two, the latter is somewhat more analytically involved and requires the application of the concentration-compactness principle and a stronger symmetry assumption on the manifold to work. \newline
Finally, in Section \ref{P}, we raise the following question: for the class of non-compact manifolds considered in the statement of Theorem \ref{100}, does perturbing the Killing field slightly perturb the resulting constrained minimizer only slightly? Under a certain interpretation of this question, our answer is affirmative, and is recorded in Proposition \ref{SP}. 
%Finally, in Subsection 1.11, we study corresponding versions of (\ref{NLSw}) and (\ref{NLKGw}) with two power type nonlinearities.

\section{\bf Existence of constrained Energy minimizers on a compact manifold}\label{EEM}

 In ~\cite{T1}, it was proved that on compact $M$, with $\alpha$ as in (\ref{spake1}) and 
\beq\label{spec1}
\lambda > -\alpha,
\eeq
we have\label{sym18}
\begin{eqnarray}\label{12}
F_{\lambda, X}(u) \cong\label{sym55} \V u\V^2_{H^1} \mbox{    }\forall \mbox{    }u \in H^1(M),
\end{eqnarray}
where $H^s$\label{sym12} denotes the usual Sobolev spaces (the above fact comes from elliptic regularity once it is known that $\lambda$ is above the lowest possible eigenvalue of $-\Delta - iX$). It was also proved that with
\beq\label{spec2}
\langle X, X\rangle \leq b^2 < 1, \text{   Spec}(-\Delta + X^2 + 2i\lambda X) \subset [\beta(\lambda), \infty), \text{   }\beta(\lambda) \in \RR
\eeq and
\beq\label{spec3}
m^2 - \lambda^2 > -\beta(\lambda),
\eeq
we have
\begin{eqnarray}\label{13}
F_{m, \lambda, X} (u) \cong \V u\V^2_{H^1} \mbox{    }\forall \mbox{    }u \in H^1(M).
\end{eqnarray}
In ~\cite{T1}, (\ref{12}) was then used to minimize $F_{\lambda, X}(u)$ over $H^1(M)$, subject to the constraint 
\begin{eqnarray}\label{ic}
\int_M |u|^{p + 1} dM = \mbox{constant}.
\end{eqnarray}
Similarly, (\ref{13}) was used to minimize $F_{m, \lambda, X} (u)$ over $H^1(M)$, subject to the constraint (\ref{ic}), which would then give a solution to (\ref{NLKGw}). 
Here, we find solutions to (\ref{NLSw}) and (\ref{NLKGw}) via constrained energy minimizers, which goes as follows:\newline
For the NLS, we will try to minimize the energy $E_X(u)$ and for the NLKG, we will try to minimize the energy $\Cal{E}_{\lambda, X}$
subject\label{defeq} to 
\beq\label{MASS}
Q(u) := \V u\V^2_{L^2} = \beta \text{   (constant}).
\eeq \newline
The reason for doing this, as mentioned before, is the following:
\begin{lemma}\label{1.2}
({\bf Energy minimizers imply solutions}) Let $M$ be a compact manifold. Then
\begin{itemize}
\item If $u \in H^1(M)$ minimizes $E_X(u)$, subject to keeping the mass $\V u\V^2_{L^2} = \beta$ (constant), then $u$ solves (\ref{NLSw}) with $K > 0$ and for some $\lambda \in \RR$.
\item If $u \in H^1(M)$ minimizes  
$
\mathcal{E}_{\lambda, X}(u)$ subject to keeping the mass $\V u\V^2_{L^2} = \beta$ (constant), then $u$ solves (\ref{NLKGw}) with $K > 0$ and for some $m \in \RR$.
\end{itemize}
\end{lemma}
\begin{proof}
On calculation, we can see that with $u, v \in H^1(M)$,
\begin{eqnarray}
\frac{d}{d\tau}\bigg|_{\tau = 0}E_X(u + \tau v) = \mbox{Re}(-\Delta u - iXu - |u|^{p - 1}u, v).
\end{eqnarray}
Also, 
\begin{eqnarray}
\frac{d}{d\tau}\bigg|_{\tau = 0}Q(u + \tau v) = 2\mbox{Re}(u, v).
\end{eqnarray}
So, for the NLS, if $u \in H^1(M)$ minimizes $E_X$ constrained by $Q(u) =$ constant, then, 
\begin{eqnarray*}
v \in H^1(M), \mbox{Re}(u, v) = 0 \implies \mbox{Re}(-\Delta u - iXu - |u|^{p - 1}u, v) = 0.
\end{eqnarray*}
%and so, there exists a $\lambda \in \mathbb{R}$ such that $\Delta u + iXu + |u|^{p - 1}u = \lambda u$, using $H^1 - H^{-1}$ duality, which means, $-\Delta u + \lambda u - iXu = |u|^{p - 1}u$.\newline
Since \label{sym32}$\text{Re}(. ,.)$ is a non-degenerate $\mathbb{R}$-bilinear dual pairing of $H^1(M)$ and $H^{-1}(M)$ (which is the dual of $H^1(M)$ with respect to the $L^2$ norm on $H^1(M)$), we have that there exists a $\lambda \in \mathbb{R}$ such that a mass-constrained $E_X$-minimizer $u$ satisfies
\begin{equation}\label{correc1}
-\Delta u + \lambda u - iXu = |u|^{p - 1}u.
\end{equation} 
Now, if $u$ solves (\ref{correc1}), then $u_a = au$ solves 
\begin{equation}
-\Delta u_a + \lambda u_a - iXu_a = |a|^{1 - p}|u_a|^{p - 1}u_a,
\end{equation}
which finally means that we can solve (\ref{NLSw}) for any $K > 0$.\newline
Similarly, for the NLKG, we have
\begin{equation}
\frac{d}{d\tau}\bigg|_{\tau = 0}\mathcal{E}_{\lambda, X}(u + \tau v) = \mbox{Re}(-\Delta u + 2i\lambda Xu + X^2u - |u|^{p - 1}u, v),
\end{equation}
and 
\begin{eqnarray}
\frac{d}{d\tau}\bigg|_{\tau = 0}Q(u + \tau v) = 2\mbox{Re}(u, v).
\end{eqnarray}
As before, since $\text{Re}(. ,.)$ is a non-degenerate $\mathbb{R}$-bilinear dual pairing of $H^1(M)$ and $H^{-1}(M)$, we have that there exists a $\sigma \in \mathbb{R}$ such that a mass-constrained $E_{\lambda, X}$-minimizer $u$ satisfies
\begin{eqnarray}
-\Delta u + Xu + 2i\lambda Xu + \sigma u = |u|^{p - 1}u.
\end{eqnarray}
Clearly, there exists $m \in \RR$ be such that $m^2 - \lambda^2 = \sigma$. Finally, using the scaling $u_a = au$, we see that we can produce a solution to (\ref{NLKGw}) for any constant $K > 0$ and some $m \in \RR$.
\end{proof}
So far we have argued that mass constrained energy minimizers, if they exist, would indeed give solutions to (\ref{NLSw}) and (\ref{NLKGw}). Now we have to establish the existence of such constrained energy minimizers. Let us label our assumptions
\beq\label{notun1}
\langle X, X\rangle \leq b^2,\text{   } b \in \RR,
\eeq
and 
\beq\label{notun2}
\langle X, X\rangle \leq b^2 < 1,\text{   } b \in \RR.
\eeq
\begin{proposition}\label{1.3}({\bf Existence of constrained energy minimizers})
On a compact Riemannian manifold $M$, if $p \in (1, 1 + 4/n)$,  then we can find, assuming (\ref{notun1}) and (\ref{notun2}) respectively, minimizers for $E_X$, and $\mathcal{E}_{\lambda, X} $ for all $ \lambda \in \RR$, when the minimization is done in the class of $H^1(M)$ functions having constant $L^2$-norm.% The respective constrained minimizers then give solutions to (\ref{NLSw}) and (\ref{NLKGw}).
\end{proposition}
\begin{proof}
Let us define
\begin{eqnarray}
I_\beta =  \mbox{inf}\{E_X | u \in H^1(M), Q(u) =  \beta\},
\end{eqnarray}
\begin{eqnarray}
I'_\beta =  \mbox{inf}\{\mathcal{E}_{\lambda, X} | u \in H^1(M), Q(u) =  \beta\}.
\end{eqnarray}
\\
Recall the Gagliardo-Nirenberg inequality:
\begin{eqnarray}\label{JALATON1}
\V u\V_{L^{p + 1}} \leq C\V u\V^{1 - \gamma}_{L^2}\V u\V^{\gamma}_{H^1},
\end{eqnarray}
where $\gamma = \frac{n}{2} - \frac{n}{p + 1}$, and hence $\gamma (p + 1) <2$.\newline
\\
Choosing $\lambda$ satisfying (\ref{spec1}), we have, 
\begin{align*}
F_{\lambda, X} (u) & = (-\Delta u - iXu + \lambda u, u) = (-\Delta u - iXu, u) + (\lambda u, u) \\
& = (-\Delta u - iXu, u) - \frac{2}{p + 1}\int_M|u|^{p + 1}dM + \frac{2}{p + 1}\int_M|u|^{p + 1}dM + (\lambda u, u)\\
& = 2E_X(u) + \frac{2}{p + 1}\int_M|u|^{p + 1}dM + \lambda Q(u).
\end{align*}
This gives via (\ref{JALATON1}),
\beq\label{broken}
F_{\lambda, X}(u) \leq 2E_X(u) + CQ(u)^{(p + 1)\frac{(1 - \gamma)}{2}}\V u\V^{\gamma(p + 1)}_{H^1} + \lambda Q(u), \text{   }C > 0.
\eeq
\label{sym2}
This derivation implies two things: \newline
Since $Q(u) = \beta$ is constant, $I_\beta > -\infty$, since $F_{\lambda, X}(u) \geq 0$. Also, since $\gamma(p + 1) < 2$, if $u_\nu$ is a sequence in $H^1(M)$ such that $E_X(u_\nu) \rightarrow I_\beta$, then (\ref{broken}) implies that $\V u_\nu\V_{H^1}$ remains bounded. This is because, $F_{\lambda, X}(u) \cong \V u\V^2_{H^1}$.\newline
Similarly, for the NLKG, choosing $m$ such that $m^2 - \lambda^2 > -\beta(\lambda)$, with $\beta(\lambda)$ defined as in (\ref{spec2}), we have
\begin{align*}
\V u\V^2_{H^1} & \cong F_{m, \lambda, X}(u) \\
& = (-\Delta u + X^2u + 2i\lambda Xu + ((m^2 - \lambda^2)u, u)\\
& = (-\Delta u + X^2u + 2i\lambda Xu, u) - \frac{2}{p + 1}\int_M|u|^{p + 1}dM + \frac{2}{p + 1}\int_M|u|^{p + 1}dM \\
& + ((m^2 - \lambda^2)u, u)\\
& = 2\mathcal{E}_{\lambda, X}(u) + \frac{2}{p + 1}\int_M|u|^{p + 1}dM + (m^2 - \lambda^2)Q(u).
\end{align*}
This gives
\begin{align}\label{broken1}
\V u\V^2_{H^1} & \lesssim 2\mathcal{E}_{\lambda, X}(u) + CQ(u)^{\frac{p + 1}{2}(1 - \gamma)}\V u\V^{\gamma(p + 1)}_{H^1} + (m^2 - \lambda^2)Q(u)\\
& = 2\mathcal{E}_{\lambda, X}(u) + K\V u\V^{\gamma(p + 1)}_{H^1} + K',
\end{align}
where $K, K' > 0$ are constants. So, as before, $I'_\beta > -\infty$ and if $u_\nu \in H^1(M)$ is a sequence satisfying $\mathcal{E}_{\lambda, X}(u_\nu) \to I'_\beta $, then $\V u_\nu\V_{H^1(M)}$ must be bounded. \newline
So, in both the cases, passing to a subsequence if need be, we can assert that there exists $u \in H^1(M)$ such that
\[
u_\nu \rightarrow u
\]
 weakly in $H^1(M)$. \newline
Now, by the compactness of Sobolev embedding\label{sym17} $H^1(M) \hookrightarrow L^2(M)$, $u_\nu$ has a convergent subsequence, called $u_\nu$ again by abuse of notation, converging in $L^2$-norm, and the $L^2$-limit is $u$. So, by the triangle inequality, $\V u\V_{L^2} = \Vert u_\nu\Vert_{L^2}$. \newline
Now to prove that $u$ attains the infimum $I_\beta$, that is, 
\[
E_X(u) = I_\beta.
\]
%it suffices to prove that $E_X(u_\nu) \rightarrow E_X(u)$, and from their respective definitions, this reduces to proving that $||u_\nu||^2_{L^2} \rightarrow ||u||^2_{L^2}$ and $||u_\nu||_{L^{p + 1}} \rightarrow ||u||_{L^{p + 1}}$. These have already been established. \newline
We know that 
\[
E_X(u) = \frac{1}{2}F_{\lambda, X}(u) - \frac{1}{p + 1}\int_M |u|^{p + 1}dM - \frac{1}{2}\lambda Q(u).
\]
Since $u_\nu \to u$ in $L^{p + 1}$-norm, by the triangle inequality, we have $\V u_\nu\V_{L^{p + 1}} \rightarrow \V u\V_{L^{p + 1}}$. So it suffices to establish that 
\[
F_{\lambda, X}(u) \leq \liminf F_{\lambda, X}(u_\nu).
\]
But this is a consequence of the fact that $u_\nu \to u$ weakly in $H^1$ and $F_{\lambda, X}(u) \cong \V u\V^2_{H^1}$. This settles the case for the NLS.\newline
For the NLKG, we have to prove that $\Cal{E}_{\lambda, X}(u) = I'_\beta$. Now, 
\beq\label{EF}
\Cal{E}_{\lambda, X}(u) = \frac{1}{2}F_{m, \lambda, X}(u) - \frac{1}{p + 1}\int_M |u|^{p + 1}dM - \frac{1}{2}(m^2 - \lambda^2) Q(u).
\eeq
Since $\V u_\nu\V_{L^2} = \V u\V_{L^2}$ and $ \V u_\nu\V_{L^{p + 1}} \to \V u\V_{L^{p + 1}}$, we just have to argue that 
\[
F_{m, \lambda, X}(u) \leq \liminf F_{m, \lambda, X}(u_\nu).
\]
As argued before, this derives from the facts that $u_\nu \to u$ weakly in $H^1(M)$ and $F_{m, \lambda, X}(u) \cong \V u\V^2_{H^1}$. That finishes the proof.
\end{proof}
\noindent {\bf Remark:} Note that the constrained $F_{\lambda, X}$ or $F_{m, \lambda, X}$ minimizers give solutions to (\ref{NLSw}) and (\ref{NLKGw}) respectively for $p \in (1, \frac{n + 2}{n - 2})$ (the optimal range for compact Sobolev embeddings), while the constrained $E_X$ or $\Cal{E}_{\lambda, X}$ minimizers give solutions to (\ref{NLSw}) and (\ref{NLKGw}) respectively for a smaller range of $p$; to wit, $p \in (1, 1 + 4/n)$. However, it is not apriori clear that the solutions obtained from the two minimization schemes are the same. Since they are different variational formulations, they can potentially give different solutions.
%\begin{remark}
%It is to be noted, however, that in ~\cite{T1}, existence of the solution was established for a larger range of $p$, namely, $p \in (1, \frac{n + 2}{n - 2})$. The energy minimization method guarantees solutions for a smaller range, namely, $p \in (1, 1 + \frac{4}{n})$. This is the optimal range of $p$ that makes the foregoing calculations with the Gagliardo-Nirenberg inequality work. 
%\end{remark}
%\hfill

\section{\bf Nontriviality of solutions and a few other remarks}\label{NS}

 As mentioned in the outline, we must note that the mere existence of minimizers will not guarantee waves that are {\em actually} travelling. For example, on a compact manifold $M$ 
\[
u = [(m^2 - \lambda^2)/K]^{\frac{1}{p - 1}}
\]
solves (\ref{NLKGw}) and it is natural to ask if this is an $F_{m, \lambda, X}$ minimizer subject to (\ref{ic}). In general, it is also possible to have non-constant constrained minimizers $u$ such that $Xu = 0$; such waves will definitely not be travelling.
~\cite{T1} established the following result in this regard:
\begin{proposition}
Given $n \geq 2$, $p \in (1, (n + 2)/(n - 2))$, $m > 0, K > 0$, and a Killing field on $S^n$ such that $\langle X, X\rangle \leq b^2 < 1$, there exists $\varepsilon_0 > 0$ such that, for $\varepsilon \in (0, \varepsilon_0]$, the constrained $F_{m, 0, X}$-minimization process produces a genuinely travelling wave solution to 
\[
-\Delta u + X^2u + \frac{1}{\varepsilon^2}m^2u = \frac{1}{\varepsilon^2}K|u|^{p - 1}u.\] 
\end{proposition}
Here we extend the above result to arbitrary compact connected isotropic manifolds. To begin the discussion, we first quote the following
\begin{lemma}({\bf Global constrained minimizer of $(-\Delta u + m^2u, u)_{L^2(\mathbb{R}^n)}$})\label{le}\\
Given
\beq
n \geq 2, \mbox{    } p \in (1, \frac{n + 2}{n - 2}), \mbox{    } A \in (0, \infty),
\eeq
there is a minimizer $u^0 \in H^1(\mathbb{R}^n)$ to $F_m(u) = ((-\Delta + m^2)u, u)_{L^2(\mathbb{R}^n)}$ subject to the constraint $\int_{\RR^n}|u|^{p + 1}d\RR^n = A$.
\end{lemma}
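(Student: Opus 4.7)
The plan is to establish existence via a direct variational argument of Berestycki--Lions type, using Schwarz symmetrization to recover compactness. Let
\[
I = \inf\{F_m(u) : u \in H^1(\mathbb{R}^n),\ I^0_p(u) = A\},
\]
and pick a minimizing sequence $\{u_j\} \subset H^1(\mathbb{R}^n)$ with $I^0_p(u_j) = A$ and $F_m(u_j) \to I$. Since $m > 0$, we have $F_m(u) = \|\nabla u\|_{L^2}^2 + m^2\|u\|_{L^2}^2 \simeq \|u\|_{H^1}^2$, so $\{u_j\}$ is uniformly bounded in $H^1$. The Gagliardo--Nirenberg inequality quoted above moreover yields $A^{1/(p+1)} = \|u_j\|_{L^{p+1}} \leq C\|u_j\|_{H^1}$, hence $I \geq c(A) > 0$ and the problem is nondegenerate.

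The main obstacle is that both $F_m$ and the constraint are translation-invariant on $\mathbb{R}^n$, so a minimizing sequence can slip off to infinity and we cannot directly extract an $L^{p+1}$-strong limit. I would handle this by Schwarz symmetrization: replacing each $u_j$ by its symmetric-decreasing rearrangement $u_j^{\ast}$ preserves $\|u_j\|_{L^q}$ for every $q$ and, by P\'olya--Szeg\H{o}, does not increase $\|\nabla u_j\|_{L^2}$. Hence $I^0_p(u_j^{\ast}) = A$ and $F_m(u_j^{\ast}) \leq F_m(u_j)$, so from the outset we may assume each $u_j$ is nonnegative, radial, and nonincreasing in $|x|$.

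For such radial sequences the Strauss radial lemma gives the compact embedding $H^1_{\mathrm{rad}}(\mathbb{R}^n) \hookrightarrow L^q(\mathbb{R}^n)$ for every $q \in (2, 2n/(n-2))$, and the hypothesis $p \in (1, (n+2)/(n-2))$ places $p+1$ precisely in this range. Passing to a subsequence, $u_j \rightharpoonup u^0$ weakly in $H^1$ and strongly in $L^{p+1}$. Strong $L^{p+1}$ convergence preserves the constraint, $I^0_p(u^0) = A$, so in particular $u^0 \not\equiv 0$. Weak lower semicontinuity of the quadratic form $F_m$ gives $F_m(u^0) \leq \liminf F_m(u_j) = I$, and admissibility forces $F_m(u^0) = I$.

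An alternative route, likely more portable to the Section~7 setting where full rotational symmetry is unavailable, is P.-L.~Lions's concentration--compactness principle. There vanishing is excluded by the lower bound $I \geq c(A) > 0$, and dichotomy by the strict subadditivity $I(A) < I(A_1) + I(A - A_1)$ for all $A_1 \in (0,A)$, which follows from the exact scaling $I(A) = A^{2/(p+1)} I(1)$ (obtained from $u \mapsto \theta u$ with $\theta = A^{1/(p+1)}$) together with $2/(p+1) < 1$.
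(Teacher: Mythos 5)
Your argument is correct and is essentially the same as the one the paper invokes: the paper does not prove the lemma itself but defers to [T1], which it describes as a reworking of the Berestycki--Lions argument, i.e.\ precisely the symmetrization-plus-radial-compactness route you carry out. The one fact the paper takes care to extract from that proof --- that the constrained minimiser can be arranged to be radial --- falls out automatically from your construction, since the symmetrized minimizing sequence converges to a radial limit.
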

\begin{proof} For a proof, refer to Lemma 2.2 of ~\cite{T1} and also ~\cite{BL}.\end{proof}
We just want to point out the following important fact about the above lemma: the proof, as stated in ~\cite{T1} and ~\cite{BL}, also establishes that we can arrange so that the constrained minimizer $u^0$ is a radial function. We will use this fact in the sequel.
Now, we have the following
\begin{theorem} \label{twim}({\bf travelling waves on isotropic  manifolds})
Given a compact connected isotropic manifold\footnote{Isotropic manifolds are defined as those Riemannian manifolds such that, given any $p \in M$ and unit vectors $v, w \in T_p(M)$, there exists $\varphi \in $ Isom$(M)$ such that $\varphi(p) = p$ and $d\varphi_p (v) = w$.} $M$ of dimension $n \geq 2, p \in (1, (n + 2)/(n - 2))$, $m > 0, K > 0$ and a Killing field $X$ such that $\langle X, X\rangle \leq b^2 < 1$, there exists $\delta > 0$ such that for $\varepsilon \in (0, \delta]$, the constrained $F_{m, 0, X}$-minimizing process produces a solution to 
\[
-\Delta u + X^2 u + \frac{1}{\varepsilon^2}m^2u = \frac{1}{\varepsilon^2}K|u|^{p - 1}u 
\]
with $Xu \neq 0$.
\end{theorem}
\begin{proof}
%Recall that isotropic manifolds are those Riemannian manifolds in which ``the geometry is the same in every direction'', the formal definition being, given any $p \in M$ and unit vectors $v, w \in T_p(M)$, there exists $\varphi \in $ Isom$(M)$ such that $\varphi(p) = p$ and $d\varphi_p (v) = w$; in fact, the full power of ``isotropicity'' is not required here. It is enough to guarantee that given a tangent vector at a point, there exist $(n - 1)$ isometries mapping that point to itself and mapping that tangent vector to $(n - 1)$ linearly independent directions, that is, these $(n - 1)$ vectors with the given tangent vector generate the tangent space at that point. This, of course, must happen for all points separately. \newline
%We start by observing the fact that $Xu = 0 \Rightarrow u = \mbox{constant}$. Let
%\[
%F_{m, 0, X}(u) = ((-\Delta + X^2 + m^2)u, u) = ||\nabla u||^2_{L^2} - ||Xu||^2_{L^2} + m^2||u||^2_{L^2}
%\]
%and
We have
\[
F_{m, 0, 0}(u) = ((-\Delta + m^2)u, u) = \V\nabla u\V^2_{L^2} + m^2\V u\V^2_{L^2}.
\]  
%Now, $iX$ is a densely defined symmetric operator on $L^2(M)$. 
Then, for all $u \in H^1(M)$, we have
\[
F_{m, 0, X}(u) = \V\nabla u\V^2_{L^2} + (X^2u, u) + m^2\V u\V^2_{L^2} = \V\nabla u\V^2_{L^2} - \V Xu\V^2_{L^2} + m^2\V u\V^2_{L^2} \leq F_{m, 0, 0}(u).
\]
Now, if $u$ is not travelling, that is, $Xu = 0$, then $F_{m, 0, X}(u) = F_{m, 0, 0}(u)$, which means that if $u \in H^1(M)$ minimizes $F_{m, 0, X}$ subject to (\ref{ic}), then $u$ also minimizes $F_{m, 0, 0}$ subject to (\ref{ic}). Now let us consider the function $v(x) = u(\phi(x))$, where $\phi \in$ Isom$(M)$\label{sym24}. We have $F_{m, 0, 0}(v) = F_{m, 0, 0}(u)$. Also,
\begin{align*}
F_{m, 0, X}(v) & \geq F_{m, 0, X}(u) \text{   (since  } u \text{  is a   } F_{m, 0, X}-\text{minimizer})\\
& = F_{m , 0, 0}(u) \text{   (since   } Xu = 0).
\end{align*}
Now, 
\begin{align*}
F_{m, 0, 0}(v) & = (-\Delta u(\phi(x)) + m^2u(\phi(x)), u(\phi(x)))\\
& = \int_M (-\Delta u(\phi(x)) + m^2u(\phi(x))\overline{u(\phi(x))}dM\\
& = \int_M (-\Delta u(y) + m^2u(y))\overline{u(y)} dM\\
& = F_{m, 0, 0}(u).
\end{align*}
%where in the third step above we have used the fact that $y = \phi(x)$ is an isometry and hence the Jacobian determinant of this transformation is 1.\newline
Now, we have 
\[
F_{m, 0, X}(v) = F_{m, 0, 0}(v) \implies Xv = 0.
\]
This happens for all $\phi \in$ Isom$(M)$. Now, choose a point $P \in M$ and let $Y$ be a smooth vector field on $M$ such that\label{XP} $X_P$ and $Y_P$ are linearly independent and $X_P$ and $Y_P$ have the same length. Consider the isometry $\phi \in \text{Isom}(M)$ such that $\phi (P) = P$ and $d\phi (X_P) = Y_P$. Then, if $v(x) = u(\phi (x))$, we have $Xv|_P = Yu|_P = 0$. Since this happens for all vector fields $Y$, we can see that $u$ is locally constant. Also, since $p$ can be any point on $M$ (and $M$ is connected), we finally have that $u$ is globally constant.\newline
Now, let us scale the metric on\label{sym21} $(M, g)$ to $M_r = (M, g^r_{ij})$ by $g^r_{ij} = r^2g_{ij}$. Consider a metric ball $U$ of radius $k$ on $M$ which is small enough so that $U$ is diffeomorphic to the open Euclidean 1-ball in $\mathbb{R}^n$. Let $U^r$ be the dilated image of $U$ under the scaling. On $M_r$, consider the vector field $X_r = \frac{1}{r}X$. Let $u_r$ denote the minimizer of $F^r_{m, 0, X_r}(u)$, subject to $\int_{M_r} |u|^{p + 1}dM_r = A$. If $u_r$ is constant, on calculation,
\[
u_r = (\frac{A}{V})^{\frac{1}{p + 1}}r^{-\frac{n}{p + 1}},
\]
where $V$ is the volume of $(M, g)$.\newline
That gives,
\begin{align*}
F^r_{m, 0, X_r}(u_r) & = m^2(\frac{A}{V})^{\frac{2}{p + 1}}Vr^nr^{-\frac{2n}{p + 1}}\\
& = Cr^{\frac{n(p - 1)}{p + 1}},
\end{align*}
where $C$ is a constant. Since $X_ru_r = 0$, this is also the infimum of $F^r_{m, 0, 0}(u)$, subject to $\int_{M_r} |u|^{p + 1}dM_r = A$.\newline
Now,\label{sym27}
\[
\inf_{u \in H^1(M_r), \text{supp } u \subset U^r} F^r_{m, 0, 0}(u) \geq \inf_{u \in H^1(M_r)} F^r_{m, 0, 0}(u).\]
So, as $r \to \infty$, 
\begin{equation}\label{tend}
\inf_{u \in H^1_0(U^r)} F^r_{m, 0, 0}(u) \to \infty.
\end{equation}
But, as $r \to \infty$, $U^r$ approaches the flat Euclidean space $\RR^n$.\newline 
Let $P$ be the centre of the balls $U^r$, which have radius $rk$. Using the radial minimizer\label{sym33} $u^0$ of Lemma \ref{le}, define
\begin{equation}
v_r(x) = \chi(x)u^0(\text{dist}_r(P, x)), x \in U^r,
\end{equation} 
where $\text{dist}_r$ is the metric distance in $(M, g^r_{ij})$ and $\chi(x)$ is a smooth radial cut-off function such that $\chi \equiv 1$ on $B^r_P(rk - \frac{1}{r})$, where the superscript $r$ on the ball denotes the ball in the $g^r_{ij}$ metric.\newline
We have 
\[
\int_{M_r} |v_r|^{p + 1} \to A\]
and \[ F^r_{m, 0, 0}(v_r) \to F^0_{m, 0, 0}(u^0) = ((-\Delta_{\RR^n} + m^2)u, u)_{L^2(\mathbb{R}^n)}< \infty,\] thereby contradicting (\ref{tend}).\newline
So, for $r$ large, there exists a constrained minimizer $u_r$ such that $X_ru_r \neq 0$, which solves
\begin{equation}\label{898989}
-\Delta_r u_r + X^2_ru_r + m^2u_r = K|u_r|^{p - 1}u_r,
\end{equation}
where $K > 0$ is arbitrary, as we can scale $u_r \mapsto au_r$. Seeing that $-\Delta_r = -\frac{1}{r^2}\Delta$ and $X_r = \frac{1}{r}X$, and scaling back (\ref{898989}), we finally have our result.

\end{proof}

%\begin{remark}
%As a verification, we inspect what happens to the scalar curvature as $r \rightarrow \infty$. We know that if the conformal change of metric is given by $g' = u^{4/(n - 2)}g$, then the corresponding change in scalar curvature is given by 
%\[
%R_{g'} = r^{-\frac{n + 2}{4}}r^{\frac{n - 2}{4}}R_g
%\]
%Plugging in $u^{4/(n - 2)} = r$, here we have 
%\[
%R_{g^r} = \frac{1}{r}R_g
%\]
%This gives some intuition behind the claim that a ball inside a curved manifold will ``tend'' towards the Euclidean space if the metric is scaled up sufficiently.
%\end{remark}
The next natural question is: what about the nontriviality of energy minimizers? Do they necessarily {\em need} to be travelling for at least a certain set of parameters? As before, we have the following:%For example, like before, 
%\[
%u = \bigg(\frac{m^2 - \lambda^2}{K}\bigg)^{\frac{1}{p + 1}}
%\]
%satisfies the NLKG. The question is: is it an energy minimizer? 
%Following previous lines of reasoning we can conclude that to answer this question, one just needs to establish the following: 
%\begin{eqnarray}
%E_{0, 0}(u) = \frac{1}{2}(-\Delta u, u) - \frac{1}{p + 1}\int_M|u|^{p + 1}dM
%\end{eqnarray}
%has a minimizer in $H^1(\RR^n)$ subject to $\V u\V^2_{L^2}$ being constant. This is nontrivial and has been established in ~\cite{CMMT} (see, in particular, Appendix A.2). This will lead to the corresponding version of the nontriviality statement for energy minimizers:
\begin{theorem}\label{twim1} 
Given a compact connected isotropic manifold $M$ of dimension $n \geq 2$, $p \in (1, 1 + 4/n)$, $m > 0, K > 0$ and a Killing field $X$ such that $\langle X, X\rangle \leq b^2 < 1$, there exists $\delta > 0$ such that for $\varepsilon \in (0, \delta]$, the mass constrained $\Cal{E}_{ 0, X}$-minimizing process produces a solution to 
\[
-\Delta u + X^2 u + \frac{1}{\varepsilon^2}m^2u = \frac{1}{\varepsilon^2}K|u|^{p - 1}u 
\]
with $Xu \neq 0$.
\end{theorem}
\begin{proof}
Proceeding as in the proof of Theorem \ref{twim}, we see that if $u$ minimizes $\Cal{E}_{0, X} = \frac{1}{2}\V \nabla u\V^2 - \frac{1}{2}\V Xu\V^2 -  \frac{1}{p + 1}\int_M |u|^{p + 1}dM$ subject to 
\beq\label{ic2}
\V u\V^2_{L^2} = \beta,
\eeq
and $Xu = 0$, then $u$ is constant.\newline
Now, let us scale the metric on $(M, g)$ to $M_r = (M, g^r_{ij})$ by $g^r_{ij} = r^2g_{ij}$. Consider a metric ball $U$ of radius $k$ on $M$ which is small enough so that $U$ is diffeomorphic to the open Euclidean 1-ball in $\mathbb{R}^n$. Let $U^r$ be the dilated image of $U$ under the scaling. On $M_r$, consider the vector field $X_r = \frac{1}{r}X$. Let $u_r$ denote the minimizer of $\Cal{E}^r_{0, X_r}$ subject to (\ref{ic2}). Then, on calculation, $u_r = (\frac{\beta}{V})^{1/2}r^{-n/2}$, where $V$ is the volume of $(M, g)$, and $\Cal{E}^r_{0, X_r}(u_r) = - \frac{1}{p + 1}(\frac{\beta}{V})^{\frac{p + 1}{2}}Vr^{n - \frac{n(p + 1)}{2}}$. This gives, 
\beq
\Cal{E}^r_{0, X_r}(u_r) \to 0 \text{   as   } r \to \infty.
\eeq
If for all $r$, $X_r u_r = 0$, then this also means
\beq\label{arkoto}
\Cal{E}^r_{0, 0}(u_r) \to 0 \text{   as   } r \to \infty.
\eeq
Now, we have, 
\[
\inf_{u \in H^1(M), \text{supp }u \subset U^r}\Cal{E}^r_{0, 0}(u) \geq \inf_{u \in H^1(M)}\Cal{E}^r_{0, 0}(u),
\]
which means,
\beq
\liminf_r \inf_{u \in H^1(M), \text{supp }u \subset U^r}\Cal{E} ^r_{0, 0}(u) \geq 0.
\eeq
But, as $r \to \infty$, $U^r$ approaches the Euclidean space $\RR^n$, and 
\beq\label{nt}
\inf_{u \in H^1(\RR^n)}\Cal{E}_{0, 0}(u) < 0.
\eeq
For a proof of the nontrivial claim (\ref{nt}), see Appendix A.2 of ~\cite{CMMT}. This gives a contradiction. By corresponding scaling arguments outlined in the proof of Theorem \ref{twim}, this proves our contention.
\end{proof}
\section{\bf $\langle X, X\rangle \leq 1$: subelliptic phenomenon on $S^n$, $n \geq 3$.}\label{SPS}
 In this section, let us relax (for the travelling waves of the NLKG) slightly the previously held restriction that $\langle X, X\rangle \leq b^2 < 1$. Now, if we allow the length of $X$ to equal 1 at some points of $M$, then $-\Delta + X^2$ is not elliptic there anymore, which somewhat restricts the techniques we have at our disposal. To balance for that, we will carry out the investigation on a much more restricted geometric setting, namely, the sphere $S^n$. ~\cite{T1} has a detailed investigation of this on the sphere $S^2$, which we recall below. If $S^2$ is centred at the origin in $\RR^3$, and $X, Y$ and $Z$ denote $2\pi$-periodic rotations of $S^2$ about the $x, y$ and $z$-axis respectively, and $L_0 = \Delta - X^2$, we have the following
\begin{proposition}(~\cite{T1})
\beq\label{aaroaaro}
\Cal{D}((-L_0)^{1/2}) \hookrightarrow L^q(S^2), \text{   } \forall q \in [2, 6),
\eeq
and the inclusion is compact. Then, under the assumptions 
\[
2 < p + 1 < 6,
\]
and \[|\lambda| < \frac{1}{2}, m^2 > \lambda^2,\]
given $K > 0$, (\ref{NLKGw}) has a nonzero solution $u \in \Cal{D}((-L_0)^{1/2})$.
\end{proposition}
We will now extend the above analysis to a sphere of dimension $n$. We will also improve on the estimate (\ref{aaroaaro}) and show that our estimate is sharp. 
\subsection{Setting up the problem}\label{cris}
 Let $X_{ij}, i < j$, denote the vector field on $S^n$, which %generates the rotation about the $ij$-plane leaving the complementary directions fixed. $X_{ij}$ 
is the restriction of the vector field $x_i\partial_j - x_j\partial_i$ on $\mathbb{R}^{n + 1}$ onto $S^n$. It is known that the Laplacian on $S^n$ is given by $\Delta = \displaystyle\Sigma_{i < j} X_{ij}^2$ (actually, much more general statements can be made; for a  survey article, see ~\cite{C}, Appendix B.4).
%\newline
%$\bar{X}_{ij} = \frac{1}{\sqrt{2}} (E_{ij} - E_{ji})$ ($E_{ij}$ represents the matrix with an entry of $1$ at the $ij$-th place and $0$'s elsewhere) represent an orthonormal basis in $\mathfrak{so}(n + 1)$, with respect to the inner product $\langle A, B\rangle = \mbox{tr}(AB^t)$. Now, up to multiplication by a constant (which depends only on the dimension $n$), this is the Killing form on $\mathfrak{so}(n + 1)$, which gives a bi-invariant metric on $SO(n + 1)$. Then the Casimir element $\Delta = \displaystyle\Sigma_{i < j} \bar{X}_{ij}^2$ descends to the Laplacian on $S^n$, which can be seen as the coset space $SO(n + 1)/SO(n)$. \newline
 So pick one of these $X_{ij}$'s, say without loss of generality, $X_{12}$, henceforth called just $X$. It is to be noted that $\langle X, X\rangle < 1$ does not hold here always. So $L_0 = \Delta - X^2$ is not globally elliptic, but it satisfies H{\"o}rmander's condition for hypoellipticity. Let us justify this: from the above sum of squares, we see that $L_0$ is elliptic on $S^n \setminus \{(\pm 1, 0,...,0), (0, \pm 1,0,..., 0)\}$. Pick, without loss of generality, the point $(1, 0,..., 0)$. %At this point, $L_0 = X_{13}^2 + X_{14}^2 +...+ X^{2}_{1, n + 1}$. 
On calculation, 
\[
[X_{23}, X_{13}]  = (x_2\partial_1 - x_1\partial_2)|_{S^n} = X_{12}.
\]
Together, at $(1, 0,..,0)$, $X_{1j}$, $j = 2, 3,..., n + 1$ generate  the full tangent\label{Hypo} space of $S^n$.\newline
% with loss of one derivative (see ~\cite{T2}, Chapter ).
 %\footnote{Just to be clear, a (pseudo)differential operator $D$ of order $2$ defined on an open set $U$ is said to be hypoelliptic when for all distributions $\phi$,
%\[
%D\phi \in C^\infty \Rightarrow \phi \in C^\infty
%\]
%}. \newline
Also, by results in ~\cite{T2} (Chapter XV, Theorem 1.8), $L_0$ is hypoelliptic with loss of a single derivative (also see the Appendix at the end), which means the following\label{sym13}:
\begin{eqnarray}
L_0\phi \in H^s_{\text{loc}} \Rightarrow u \in H^{s+1}_{\text{loc}}.
\end{eqnarray} 
This gives that  
\[
\mathcal{D}(L_0) \subseteq H^1(S^n),
\] 
which in turn implies, by complex interpolation\label{sym5},
\beq\label{badd}
\mathcal{D}((-L_0)^{1/2}) = [L^2(S^n), \Cal{D}(-L_0)]_{1/2} \subset [L^2(S^n), H^1(S^n)]_{1/2} = H^{1/2}(S^n).
\eeq
%\begin{equation}\label{Psd}
%(-L_0 u, u) \geq C%%||u||^2_{H^{1/2}}
%\end{equation}
%when $u$ is orthogonal to the constants, the justification of the last statement being:\newline
%$(-L_0)^{-1/2} : L^2 \rightarrow H^{1/2}$ when $\int u = 0$ and if $(-L_0)^{1/2} u = v$, then \newline
%$||(-L_0)^{-1/2} v||_{H^{1/2}} \leq C||v||$ \newline
%(we take this space to comment that there is no parallel of this phenomenon in the Euclidean setting. For example, if on $\mathbb{R}^n$ we select the Killing field $\frac{\partial}{\partial x_1}$, $\Delta - X^2$ is not hypoelliptic. This is also due to a result of H{\"o}rmander; for details, see ~\cite{Y}. It seems that some curvature is necessary for subelliptic phenomena).\newline
Now\label{sym16}, if we let 
\beq L_\alpha = L_0 - i\alpha X,
\eeq
we see that $L_\alpha$ is self-adjoint for all $\alpha \in \RR$. However, to work with $(-L_\alpha)^{1/2}$, or even to define it via the spectral theorem, we need to establish the negative semidefiniteness of $L_\alpha$ for a certain range of $\alpha$ and establish what the range is. %In other words, to proceed, we must find an analogue of (\ref{Psd}) for the operator $L_\alpha$.
We actually have
\begin{lemma}
$L_\alpha = \Delta - X^2 - i\alpha X$ is negative semidefinite for $|\alpha| < n - 1$.
\end{lemma}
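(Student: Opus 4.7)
The plan is to decompose $-L_\alpha=-\Delta+X^{2}+i\alpha X$ (the operator one actually needs to show is positive semidefinite, as the paper's sign conventions are set up so that the Casimir $\sum X_{ij}^{2}$ is negative semidefinite) along the $\mathfrak{so}(n+1)$ generators and to absorb the lower-order term $i\alpha X$ via completions of the square on the pairs that couple to $X=X_{12}$. Using $\Delta=\sum_{i<j}X_{ij}^{2}$,
\[
-\Delta+X^{2}=\sum_{\substack{(i,j)\neq(1,2)\\ i<j}}(-X_{ij}^{2}) = \underbrace{\sum_{3\leq i<j\leq n+1}(-X_{ij}^{2})}_{(A)} \;+\; \underbrace{\sum_{k=3}^{n+1}\bigl(-X_{1k}^{2}-X_{2k}^{2}\bigr)}_{(B)}.
\]
The block $(A)$ has indices disjoint from $\{1,2\}$, so it commutes with $X$, and each summand is already positive semidefinite via $-X_{ij}^{2}=X_{ij}^{*}X_{ij}$. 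All of the coupling with $i\alpha X$ thus takes place in the $n-1$ pairs of block $(B)$.

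For each $k\geq 3$, the $\mathfrak{so}(n+1)$-bracket is (up to a fixed normalization that can be absorbed into the scale of $\alpha$) $[X_{1k},X_{2k}]=-X_{12}=-X$. Completing the square with $\epsilon\in\{+1,-1\}$,
\[
(X_{1k}+i\epsilon X_{2k})^{*}(X_{1k}+i\epsilon X_{2k}) = -X_{1k}^{2}-X_{2k}^{2}-i\epsilon[X_{1k},X_{2k}] = -X_{1k}^{2}-X_{2k}^{2}+i\epsilon X \;\geq\; 0,
\]
so $-X_{1k}^{2}-X_{2k}^{2}\pm iX\geq 0$ for each $k$. Summing over the $n-1$ values $k=3,\dots,n+1$ and using convexity of the cone of positive semidefinite self-adjoint operators,
\[
\sum_{k=3}^{n+1}\bigl(-X_{1k}^{2}-X_{2k}^{2}\bigr)+i\alpha X \;\geq\; 0\qquad\text{for all }|\alpha|\leq n-1;
\]
adding back block $(A)$ yields $-\Delta+X^{2}+i\alpha X\geq 0$ on the entire claimed range, which is in fact slightly sharper than $|\alpha|<n-1$.

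The only nontrivial step is the choice of completion: pairing $X_{1k}$ with $X_{2k}$ is the unique way of producing exactly $X_{12}=X$ (rather than some other generator) as the commutator, and the factor $n-1$ then arises as the simple count of transverse indices $k\in\{3,\dots,n+1\}$. Everything else reduces to finite-dimensional linear algebra in $\mathfrak{so}(n+1)$ and convexity of the PSD cone; no PDE input beyond the self-adjointness of $iX$ is needed.
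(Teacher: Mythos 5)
Your argument is correct, but it takes a genuinely different route from the paper's. The paper works spectrally: it decomposes $L^2(S^n)$ into the spaces $V_k$ of degree-$k$ spherical harmonics, on which $-\Delta$ acts by $k(k+n-1)$ and $\tfrac{1}{i}X$ has integer eigenvalues $\mu$ with $|\mu|\le k$, so the relevant quadratic form is bounded below on $V_k$ by $k(k+n-1)-k^2-|\alpha|k=k\,(n-1-|\alpha|)\ge 0$. You replace this representation-theoretic input by the purely algebraic identity $(X_{1k}+i\epsilon X_{2k})^{*}(X_{1k}+i\epsilon X_{2k})=-X_{1k}^{2}-X_{2k}^{2}-i\epsilon[X_{1k},X_{2k}]$, the bracket relation, and convexity of the set of admissible $\alpha$ (which is an intersection of affine half-lines, hence convex and contains $[-(n-1),n-1]$ once it contains the endpoints). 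The spectral proof buys finer information: it identifies the kernel and shows the threshold is attained on $V_1$ by $x_1\pm ix_2$, i.e.\ that $n-1$ is sharp, which your sum-of-squares argument does not see. Your proof is in exchange more portable: it uses only the sum-of-squares presentation of $\Delta$ and the commutators, so it transfers directly to the sub-Laplacians on $S^3$, $S^7$ and the $K$-contact setting of Section 5, where explicit harmonics are less convenient. One point you should not leave to ``a fixed normalization absorbed into the scale of $\alpha$'': the numerical value $n-1$ is the whole content of the lemma, so the constant $c$ in $[X_{1k},X_{2k}]=-cX_{12}$ must actually equal $1$. It does, provided the generators are taken as the rotation fields $X_{ij}=x_i\partial_j-x_j\partial_i$ --- the normalization for which $\Delta=\sum_{i<j}X_{ij}^2$ really is the Laplacian and $\langle X,X\rangle\le 1$ with equality on the equator; using the orthonormalized matrices $\tfrac{1}{\sqrt 2}(E_{ij}-E_{ji})$ as vector fields would introduce a spurious factor and change the bound. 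With that checked, your computation yields the closed range $|\alpha|\le n-1$, which is also what the paper's proof actually establishes despite the strict inequality in the statement.
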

\begin{proof}
To start, we can do an eigenvector decomposition of $L^2(S^n)$ with respect to the self-adjoint $\Delta$. Since $X$ is Killing, it commutes with $\Delta$ and respects the eigenspace decomposition. This means, $X$ maps any eigenspace of $\Delta$ into itself. %Alternatively, we can use the well-known fact (see [Z], for instance) that the decomposition of $L^2(S^n)$ into irreducible components due to the natural $SO(n + 1)$ action is the same as the eigenvalue decomposition of $L^2(S^n)$ with respect to $\Delta$. So, to establish the negative semidefiniteness of $L_\alpha$ we are content with looking at the eigenvalues of $X$ on each eigenspace of $\Delta$. \newline
Let $V_k$ denote the space of degree $k$ harmonic homogeneous polynomials, defined on $\mathbb{R}^{n + 1}$ and then restricted to $S^n$. It is known that all the eigenfunctions of the Laplacian on $S^n$ are given by the members of $V_k$ (see ~\cite{T4}, Chapter 8, Section 4). The eigenvalue corresponding to $V_k$ is $k(k + n -1)$. It is also known that $V_k$ is generated by polynomials of the form \newline $P_C(x) = (c_1 x_1 + ..... + c_{n+1}x_{n+1})^k$, where $x_i \in \RR^{n + 1}$, $c_i \in \mathbb{C}$ and $\Sigma c_i^2 = 0$.  %Now it is clear that the modulus of the eigenvalues of $X$ on $V_k$ are less than or equal to $k$. Also, calculation shows that $(c_1 x_1 + c_2x_2)^k$ gives an eigenfunction of $X$ with eigenvalue $ik$. 
Now, 
\[
X(P_C(x)\bigg|_{S^n}) = [(x_1\pa_2 - x_2\pa_1)P_C(x)]\bigg|_{S^n}.
\]
But,
\begin{align*}
[(x_1\pa_2 - x_2\pa_1)P_C(x)]\bigg|_{S^n} & = [(x_1\pa_2 - x_2\pa_1)(c_1x_1 +....+ c_{n + 1}x_{n + 1})^k]\bigg|_{S^n}\\
& = k(x_1c_2 - x_2c_1)(c_1x_1 +....+ c_{n + 1}x_{n + 1})^{k - 1}\bigg|_{S^n}.
\end{align*}
If $P_C(x)\bigg|_{S^n}$ is an eigenfunction of $X$, then we must have $\gamma(x_1c_2 - x_2c_1)\bigg|_{S^n} = (c_1x_1 + ... + c_{n + 1}x_{n + 1})\bigg|_{S^n}, \gamma \in \CC$. That gives, $c_3 = ... = c_{n + 1} = 0$. Also, using that $c^2_1 + c^2_2 = 0$, we see that $\gamma = \pm i$. From the above discussion, we conclude that an element $f \in V_k$ has eigenvalue $l$ with respect to $iX$, then $|l| \leq k$, and $k$ is also a possible eigenvalue. \newline%$P_C(x)\bigg|_{S^n}$ is an eigenfunction of $X$ if and only if $P_C(x)\bigg|_{S^n} = (c_1x_1 + c_2x_2)^k\bigg|_{S^n}$ and then it has eigenvalue $ik$ or $-ik$, depending on the coefficients $c_i$.\newline
Now, on the finite dimensional vector space $V_k$, the operator $iX$ is Hermitian, allowing it to have a basis of eigenfunctions, say, $v_1, v_2,..., v_{m_k}$, where $m_k = \text{dim }V_k$. Choose any of these basis eigenfunctions, and call it $u^*$. Let the eigenvalue corresponding to $u^*$ be $l$, where, as commented before, $|l| \leq k$. Then,
\begin{align*}
(-L_{\alpha}u^*, u^*) & = k(k + n - 1) \V u^*\V^2_{L^2} + (X^2u^* + i\alpha Xu^*, u^*)\\
& \geq k(k + n - 1)\V u^*\V^2_{L^2} - l^2 \V u^*\V^2_{L^2} - |\alpha| l\V u^*\V^2_{L^2}\\
& = (k(k + n - 1) - l^2 -  |\alpha| l)\V u^*\V^2_{L^2} \geq 0.
\end{align*} 
This finally implies that $L_\alpha$ is negative semidefinite with one dimensional kernel (containing only the constants) when $|\alpha| < n - 1$. 
\end{proof}
Clearly, $\Cal{D}((-L_{2\lambda})^{1/2}) = \Cal{D}((-L_{0})^{1/2})$, and by (\ref{badd}), both lie inside $H^{1/2}(S^n) $, and by Sobolev embedding, $H^{1/2}(S^n) \hookrightarrow L^{\frac{2n}{n - 1}}(S^n)$.\newline
Now, let $q_*$ be the optimal (greatest) number such that
\beq\label{OPTI1}
\mathcal{D}((-L_0)^{1/2}) \subset L^q(S^n), \forall \mbox{   } q \in [2, q_*],
\eeq
or
\beq\label{OPTI2}
\mathcal{D}((-L_0)^{1/2}) \subset L^q(S^n), \forall \mbox{   } q \in [2, q_*).
\eeq
Whichever be the case, we will now argue that the inclusions (\ref{OPTI1}) and (\ref{OPTI2}) are continuous via the closed graph theorem applied to the inclusion operator. For the norm on $\Cal{D}((-L_{0})^{1/2})$, we use the graph norm given by
\[
\V u\V_{\Cal{D}((-L_{0})^{1/2})}^2 = ((-L_0)^{1/2} u, (-L_0)^{1/2} u) + (u, u),\]
which turns $\Cal{D}((-L_{0})^{1/2})$ into a Hilbert space (see Proposition 1.4 of ~\cite{Sc}). Let us argue the applicability of the closed graph theorem above. It suffices to demonstrate the impossibility of the following scenario: $u_n$ is a sequence in $\Cal{D}((-L_{0})^{1/2})$, such that $u_n \to u$ in $\Cal{D}((-L_{0})^{1/2})$-norm, $u_n \to v$ in $L^q$ norm, and $u \neq v$. Observe that  $u_n \to u$ in $\Cal{D}((-L_{0})^{1/2})$-norm implies that $u_n \to u$ in $L^2$-norm. Also, being in a compact setting, $u_n \to v$ in $L^q$-norm means $u_n \to v$ in $L^2$-norm, meaning $u = v$.\newline We also note that the\label{compactn} continuity of the inclusion in (\ref{OPTI1}) or (\ref{OPTI2}) will actually guarantee that (\ref{OPTI2}) is compact. Let us argue this first: by interpolation (see ~\cite{T3}, Chapter 4, Section 2), for all $q \in [2, q^*)$, we can produce $s \in (0, 1)$ such that $\Cal{D}((-L_{0})^{s/2}) \subset L^q(S^n)$ is a continuous inclusion.
\[
\Cal{D}((-L_0)^{s/2}) = [L^2(S^n), \Cal{D}((-L_0)^{1/2})]_s \subset [L^2(S^n), L^{q'}(S^n)]_s,\]
where $q' < q_*$ is chosen such that $[L^2(S^n), L^{q'}(S^n)]_s = L^q$. We can then compose the continuous inclusion $\Cal{D}((-L_{0})^{s/2}) \subset L^q(S^n)$ with the compact inclusion $\Cal{D}((-L_{0})^{1/2}) \hookrightarrow \Cal{D}((-L_{0})^{s/2})$ (the fact that this last inclusion is compact is not trivial; for a proof, see Theorem A.38 of ~\cite{MK}). %(for more justification, see Subsection \ref{CC} in Appendix C). 
%\[
%\mathcal{D}((-L_0)^{1/2}) \subset H^{1/2}(S^n) \hookrightarrow L^q(S^n) \mbox{   is compact   }, \forall \mbox{   } q \in [2, q_*)
%\]
Since the composition of a bounded and a compact operator is compact, we have our claim that continuity of the inclusion (\ref{OPTI1}) or (\ref{OPTI2}) would imply compactness of (\ref{OPTI2}).\newline
Now, we have our existence result:
\begin{proposition}\label{Pro} ({\bf Existence result on $S^n$})
With $X = x_1\pa_2 - x_2\pa_1|_{S^n}$, where $x_1\pa_2 - x_2\pa_1$ denotes the usual rotation vector field on $\RR^{n + 1}$, assume
\begin{eqnarray}
2 < p + 1 < q_*,
\end{eqnarray}
where $q_*$ is the greatest number such that (\ref{OPTI1}) or (\ref{OPTI2}) is compact.
Also assume 
\begin{eqnarray}
|\lambda| < \frac{n - 1}{2}, \text{    }m^2 > \lambda^2.
\end{eqnarray}
Then, given $K > 0$, the equation
\begin{eqnarray}
-L_{2\lambda}u + (m^2 - \lambda^2)u = K|u|^{p - 1}u
\end{eqnarray}
has a nonzero solution $u \in \mathcal{D}((-L_{2\lambda})^{1/2})$.
\end{proposition}
\begin{proof}
As we have shown above, $-L_{2\lambda}$ is positive semidefinite when $|\lambda| < \frac{n - 1}{2}$. So, the spectral theorem gives a definition of $(-L_{2\lambda})^{1/2}$. Then we use the fact that 
\begin{eqnarray}\label{GAPRE}
F_{m, \lambda, X}(u) = (-L_{2\lambda} u, u) + (m^2 - \lambda^2)(u, u) \cong \V u\V^2_{\mathcal{D}((-L_{2\lambda})^{1/2})},
\end{eqnarray}
where $\V u\V^2_{\mathcal{D}((-L_{2\lambda})^{1/2})}$ is the graph norm given by 
\[
\V u\V^2_{\mathcal{D}((-L_{2\lambda})^{1/2})} = ((-L_{2\lambda})^{1/2} u, (-L_{2\lambda})^{1/2} u) + (u, u),
\]
which turns $\mathcal{D}((-L_{2\lambda})^{1/2})$ into a Hilbert space (see Proposition 1.4 of ~\cite{Sc}). \newline
Let
\[
I_\beta = \inf \{F_{m, \lambda, X}(u) : u \in \mathcal{D}((-L_{2\lambda})^{1/2})\},
\]
under the constraint (\ref{ic}). Now, take a sequence of functions $u_\nu \in \mathcal{D}((-L_{2\lambda})^{1/2})$ such that $F_{m, \lambda, X}(u_\nu) \to I_\beta$. Then, (\ref{GAPRE}) implies that $\V u_\nu\V_{\Cal{D}((-L_{2\lambda})^{1/2})}$ is uniformly bounded, which in turn means (a subsequence of) $u_\nu$ weakly converges to $u \in \mathcal{D}((-L_{2\lambda})^{1/2})$. By virtue of the compactness of (\ref{OPTI2}), $u_\nu$ has a subsequence, still called $u_\nu$ with mild abuse of notation, that is strongly $L^{p + 1}$ convergent to $u$, where $p + 1 < q_*$, meaning that $\V u\V^{p + 1}_{L^{p + 1}} = \beta$, as in (\ref{ic}). Also, 
\[
F_{m, \lambda, X}(u) \leq \liminf F_{m, \lambda, X}(u_\nu).
\]
So, $u \in \mathcal{D}((-L_{2\lambda})^{1/2}))$ gives a constrained minimizer to $F_{m, \lambda, X}(u)$ subject to (\ref{ic}).
%For $p$ such that $\mathcal{D}((-L_0)^{1/2}) \subset L^{p + 1}(S^n)$, we can pick any positive number $\beta$ and minimize $F_{m, \lambda, X}$ under the constraint 
%\begin{eqnarray}
%\int_{S^n} |u|^{p + 1} dS = \beta
%\end{eqnarray}
The constrained minimizer will give a solution to (\ref{NLKGw}), as wanted.
\end{proof}
\noindent {\bf Remark}: Arguing as before with the closed graph theorem applied to the identity map, we can establish that 
\beq 
\V .\V_{\mathcal{D}((-L_{2\lambda})^{1/2}))} \cong \V .\V_{\mathcal{D}((-L_{0})^{1/2}))}.
\eeq
This is because $u_n \to u$ in $\V .\V_{\mathcal{D}((-L_{2\lambda})^{1/2}))}$-norm implies $u_n \to u$ in $L^2$-norm, and if $u_n \to v$ in $\V .\V_{\mathcal{D}((-L_0)^{1/2}))}$-norm, then $u_n \to v$ in $L^2$-norm, whence $u = v$.\newline
It is not apriori clear that the constrained $F_{m, \lambda, X}$ minimizer obtained above is non-constant always. However, when $\lambda = 0$, the arguments of Theorem \ref{twim} go through, giving the following:
\begin{lemma} Given $p + 1 \in (2, q_*)$, $m > 0, K > 0$ and the Killing field $X$ as mentioned at the beginning of Subsection \ref{cris}, there exists $\delta > 0$ such that for $\varepsilon \in (0, \delta]$, the constrained $F_{m, 0, X}$-minimizing process on $S^n$ produces a solution to 
\[
-\Delta u + X^2 u + \frac{1}{\varepsilon^2}m^2u = \frac{1}{\varepsilon^2}K|u|^{p - 1}u 
\]
with $Xu \neq 0$.
\end{lemma}

\subsection{What is the optimal $q_*$?}\label{Mokhom}
On $S^n$, $H^{1/2}$ Sobolev embeds in $L^{\frac{2n}{n - 1}}$. By mimicking the calculations in ~\cite{T1}, we now try to see if this can be improved. For any vector field $X_{ij} \neq X$, the points where $X_{ij}$ vanish, will lie on, say, $U_{ij}$ where $U_{ij}$ is isometric to $S^{n - 2}$. By using the ellipticity of $L_0$ away from points which have coordinates $(\pm 1,0,...,0)$ and $ (0,\pm 1,..,0))$, or the ``poles'', we have, 
\begin{eqnarray}
u \in \mathcal{D}((-L_0)^{1/2}) \Rightarrow \phi u \in H^1(S^n) \subseteq L^{\frac{2n}{n - 2}}(S^n)
\end{eqnarray}
by Sobolev embedding, where $\phi \in C^\infty_c(S)$\label{sym8} and $S = S^n \setminus \{(\pm 1,0,...,0), (0,\pm 1,..,0)\}$.  \newline
Before proceeding, let us prove the following
\begin{lemma}
\begin{eqnarray}\label{Domjale}
\mathcal{D}((-L_0)^{1/2}) = \{u \in L^2(S^n) : X_{ij}u \in L^2(S^n), X_{ij} \neq X\}.
\end{eqnarray}
\end{lemma}
\begin{proof}
We start by referring to Proposition 1.10, Chapter 8 of ~\cite{T4}, which gives a characterization of $\Cal{D}(A^{1/2})$, where $A$ is a non-negative, unbounded self-adjoint operator on a Hilbert space $H$ constructed by the Friedrichs method.\newline% (see Section 10.4 of ~\cite{Sc}). \newline
In the notation of the said proposition, here $H = L^2(S^n)$. Also, let
\[ 
H_1 =  \{u \in L^2(S^n) : X_{ij}u \in L^2(S^n), X_{ij} \neq X\}.\]
\[(u, v)_{H_1}  = (u, v) + \sum_{X_{ij} \neq X} (X_{ij}u, X_{ij}v). \]
$J$ is the natural inclusion $H_1 \to L^2(S^n)$. Then we have that
\begin{align*}
\Cal{D}(-L_0) = \{ u \in H_1: v \mapsto (u, v) + \sum_{X_{ij} \neq X} (X_{ij}u, X_{ij}v) \text{  is continuous in   } v\\
 \forall v \in H_1 \text{   in the  } L^2\text{-norm} \}.
\end{align*}
Now, if we can prove that $H_1$ is a Hilbert space with inner product $(.,)_{H_1}$, then the conclusion of Proposition 1.10 ( Chapter 8 of ~\cite{T4}) gives that $\mathcal{D}((-L_0)^{1/2}) = H_1$.\newline
Now, call $H_{ij} = \Cal{D}(X_{ij}) = \{u \in L^2 : X_{ij}u \in L^2\}$, which becomes a Hilbert space with graph inner product $(u, v)_{ij} = (u, v) + (X_{ij}u, X_{ij}v)$. Then, 
\[
H_1 = \bigcap_{i < j}H_{ij}\]
will become a Hilbert space with the norm $(.,.)_{H_1}$. This is because, given a Cauchy sequence in $H_1$, it becomes a Cauchy sequence in each $H_{ij}$, and since the above intersection is finite, we can select a subsequence which is convergent in every $H_{ij}$. Also, the limit of this subsequence must be the same in every $H_{ij}$, because of the shared component $(.,.)$ ($L^2$ inner product) in each $(.,.)_{ij}$. The limit then lies in the intersection $H_1$, which proves that $H_1$ is a Hilbert space with inner product $(.,.)_{H_1}$.
\end{proof}
%For a proof of this, see Appendix C, Subsection \ref{dom}.\newline 
%It is also clear that the only points on $\bigcup U_{ij}$ where we need to check are points which have coordinates $(1,0,...,0), (0,1,..,0),...,(0,0,..,1)$, or the ``poles''.
With that in place, we take a function $u \in \mathcal{D}((-L_0)^{1/2}))$ having small support in a neighborhood around any of above poles, say, without loss of generality, $(1, 0,...,0)$, and project it down to $\mathbb{R}^n$. This produces a compactly supported projected function on $\RR^n$, still called $u$ with mild abuse of notation, such that 
\begin{eqnarray}\label{p}
u \in H^{1/2}(\mathbb{R}^n), \mbox{    } \partial_{x_i} u \in L^2(\mathbb{R}^n), \text{    }\forall i \in \{2, 3,..., n\}.
\end{eqnarray}
%where the tangent to the equator $\gamma$ gets mapped to the $x_1$-axis. \newline
Now, observe that (\ref{p}) implies, after Fourier transforming, $(\xi_1^2 + \xi_2^2 + ..... + \xi^2_n)^{1/4}\hat{u} \in L^2(\mathbb{R}^n)$ and $\xi_i\hat{u} \in L^2(\mathbb{R}^n) \mbox{   }$ for all $i \in \{2, 3,..., n\}$. That means
\[
\hat{f} = (\xi_1^2 + \xi_2^4 + .... + \xi_n^4)^{1/4}\hat{u} \in L^2(\mathbb{R}^n).
\]
We label $u = k\ast f$, where 
\[
\hat{k} = (\xi_1^2 + \xi_2^4 + .... + \xi_n^4)^{-1/4}.
\]
%On calculation, we have that, $\hat{k} \in S^{-1/2}_{1/2, 0}(\mathbb{R}^n \setminus \{0\})$, where $S^m_{\rho, \delta}$\label{sym25} denotes the usual pseudodifferential symbol class (see Subsection \ref{PDC} in Appendix B). 
This means that \label{sym7}
\[
k \in C^\infty(\mathbb{R}^n \setminus \{0\}).
\]
%For a justification of this, see Subsection \ref{Ksmo} of Appendix C.
Let us quickly justify this: choose $\psi (\xi)$, a bump function that is identically equal to $1$ around the origin. Then $\psi \hat{k}$ is compactly supported, which means $\widehat{(\psi \hat{k})}$ is smooth. So it suffices to prove that $\widehat{((1 - \psi)\hat{k})} \in C^\infty (\RR^n \setminus \{0\})$. \newline
Calling $g = (1 - \psi)\hat{k}$, we see that $g$ is smooth. Now, choose $x \neq 0$, and let $\varphi$ be a smooth bump function around $x$ that is zero at the origin. We need to prove that $\varphi \hat{g}$ is smooth, or equivalently, $\hat{\varphi}\ast g$ vanishes faster than powers of $|\xi|$. \newline
Define $\eta (x) = |x|^{-2m}\varphi(x)$. We see that $\eta (x) \in C^\infty_c(\RR^n)$. Then we have, $\hat{\varphi} = (-\Delta)^m \hat{\eta}$, and $\hat{\varphi}\ast g = ((-\Delta)^m\hat{\eta})\ast g = \hat{\eta}\ast ((-\Delta)^m g)$. Now, no matter how high $m$ is, $\hat{\eta}$ is always Schwartz. Since we can choose $m$ as large as we want, we can make $(-\Delta)^m g$ decay as fast as we want, which gives that $\hat{\varphi}\ast g$ decays faster than powers of $|\xi|$ as infinity.
%Let us justify the last statement. Fix $\eta = [\frac{n}{2}] + 1 + k$, where $k$ is a positive integer. We have, away from the origin,
%\[
%|D^\gamma_\xi\hat{k}(\xi)| \lesssim (1 + |\xi|^2)^{-\frac{1}{4}(1 + |\gamma|)}\]
%When $|\gamma|$ is large, $\xi^\alpha D^\gamma_\xi \hat{k}(\xi) \in L^2 \Rightarrow D^\alpha_x(x^\gamma k(x)) \in L^2$ (CHECK THIS VERY CAREFULLY!!), where $\alpha$ is chosen such that $|\alpha| \leq \eta$. We can say, $x^\gamma k(x) \in H^{\eta}$, which lies in $C^k$ by Sobolev embedding. Now, by varying $k$, we see that $k$ is smooth away from the origin.
\newline
%The last statement holds in a much more general setting; this is actually a special case of Proposition 2.1, Chapter 7 of ~\cite{T4}, which in effect says that the Schwartz kernel of a pseudodifferential operator in\label{sym34} $OPS^m_{\rho, \delta}, \rho > \delta$ is smooth off the diagonal (also see Definition \ref{DIAG}, Appendix D).\newline
Also, $k$ satisfies the anisotropic homogeneity 
\beq\label{Aniho}
k(\delta^2x_1, \delta x_2,..., \delta x_n) = \delta^{-n}k(x_1, x_2,..., x_n).
\eeq
Define $\Omega_0 = \{(x_1,...., x_n): 1/2 \leq |x|^2 < 1\}$ and define $\Omega_j$ for $j \in \mathbb{Z}$ as the image of $\Omega_{j - 1}$ under the map
\[
(x_1, x_2,...., x_n) \mapsto (2^{-1}x_1, 2^{-1/2}x_2,...., 2^{-1/2}x_n).
\]
Using (\ref{Aniho}), we have 
\beq\label{newc}
|k| \leq C2^{nj/2} \mbox{    on    } \Omega_j,
\eeq
where \label{sym30}
\[
|\Omega_j| = 2^{-(n + 1)/2}|\Omega_{j - 1}| = C2^{-((n + 1)/2)j}.
\]
Set 
\[
k_1 = k \text{  on   } \bigcup_{j \geq 0} \Omega_j, \text{  } 0 \text{   elsewhere}
\]
and 
\[
k_2 = k \text{  on   } \bigcup_{j < 0} \Omega_j, \text{  } 0 \text{   elsewhere},
\]
so that $k = k_1 + k_2$. Also, let $u_l = k_l * f, l = 1, 2$.\newline
By (\ref{newc}), we have
%Note that since $u$ has compat support, and $u = k\ast f$, in the preceding and ensuing calculations, we will say that $j \geq 0$ without any loss of generality. So then
\begin{eqnarray}
\int|k_1|^rd\RR^n \leq C\sum_{j \geq 0} 2^{njr/2 - ((n + 1)/2)j} < \infty
\end{eqnarray}
when $r < \frac{n + 1}{n}$. 
Also, 
\begin{eqnarray}
\int|k_2|^rd\RR^n \leq C\sum_{j < 0} 2^{njr/2 - ((n + 1)/2)j} < \infty
\end{eqnarray}
when $r > \frac{n + 1}{n}$.
Now by using Young's inequality for convolutions, we have,
$u_1 \in L^q$, where $q \in [2, \frac{2(n + 1)}{(n - 1)})$ and $u_2 \in L^q$, where $q \in (\frac{2(n + 1)}{(n - 1)}, \infty)$. But $u_2 = u - u_1 \Rightarrow u_2 \in L^2$, and by interpolation, $u_2 \in L^q, q \in [2, \infty)$. So, finally,  $u \in L^q$, where $q \in [2, \frac{2(n + 1)}{(n - 1)})$. So, in our previously introduced notation, $q_* = \frac{2(n + 1)}{(n - 1)}$.\newline

\subsection{The endpoint case $q = \frac{2(n + 1)}{(n - 1)} = 6$ for $n = 2$.}
In the special case of $n = 2$, our setting is now the sphere $S^2$. We already have
\begin{eqnarray}
\mathcal{D}((-L_0)^{1/2}) \subset L^q(S^2), \mbox{     } \forall q \in [2, 6).
\end{eqnarray}
Here we extend the above inclusion up to $q = 6$ and also argue that this is sharp. We have%Observe that in view of the inclusion $L^{p_1} \subset L^{q_1}$ when $q_1 \leq p_1$ on finite measure spaces, proving the following is sufficient: 
\begin{lemma}\label{Pro-op}({\bf Optimal embedding and sharpness})\label{seshlabel}
\[\mathcal{D}((-L_0)^{1/2}) \subset L^6(S^2).\] Also, this embedding is sharp. That is,
\[\mathcal{D}((-L_0)^{1/2}) \subset L^q(S^2) \Longrightarrow q \leq 6.\]
\end{lemma}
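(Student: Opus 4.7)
The plan combines a localized anisotropic Hardy--Littlewood--Sobolev argument, for the upper bound, with a direct anisotropic scaling test, for the sharpness. The preceding discussion has already reduced the interesting case to a neighbourhood of the equator $\gamma$, since away from $\gamma$ the ellipticity of $\Delta$ places $u \in \mathcal{D}((-L_0)^{1/2})$ into $H^1_{\mathrm{loc}}$, and $H^1(S^2) \hookrightarrow L^q$ for every finite $q$, in particular $q = 6$.

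For the upper bound, work in the chart that flattens a neighbourhood of a point on $\gamma$ so that $\gamma$ becomes the $x_1$-axis, and write $u = k\ast f$ with $f \in L^2(\mathbb{R}^2)$ and $\hat{k}(\xi) = (\xi_1^2 + \xi_2^4)^{-1/4}$, exactly as set up in the preceding paragraph of the paper. The key step is to sharpen the integrability count already carried out for $k$ to the endpoint $k \in L^{3/2,\infty}(\mathbb{R}^2)$. This follows from the anisotropic homogeneity $k(\delta^2 x_1, \delta x_2) = \delta^{-2} k(x_1, x_2)$ together with smoothness of $k$ away from the origin: relative to the anisotropic norm $\rho(x) = \max(|x_1|^{1/2}, |x_2|)$, one obtains $|k(x)| \leq C\rho(x)^{-2}$, and since $|\{\rho < t\}| \simeq t^3$ (homogeneous dimension $Q = 3$), a direct distribution-function computation gives the weak $L^{3/2}$ bound. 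Because $1/6 = 1/(3/2) + 1/2 - 1$, the anisotropic Hardy--Littlewood--Sobolev inequality -- equivalently, O'Neil's weak-type Young inequality followed by Marcinkiewicz interpolation along the scaling line of $k$ -- yields $\|k \ast f\|_{L^6} \leq C\|f\|_{L^2}$. A partition of unity subordinate to a finite cover of $S^2$ by $\gamma$-neighbourhood charts and elliptic charts then gives the global embedding $\mathcal{D}((-L_0)^{1/2}) \hookrightarrow L^6(S^2)$.

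For the sharpness, fix a nontrivial $\varphi \in C^\infty_c(\mathbb{R}^2)$ supported near the origin of the same chart and set $u_\delta(x_1,x_2) = \varphi(x_1/\delta^2,\, x_2/\delta)$ for $\delta \in (0,1]$, viewed as an element of $\mathcal{D}((-L_0)^{1/2})$ on $S^2$ through the chart. A direct change of variables gives $\|u_\delta\|_{L^q} = \delta^{3/q}\|\varphi\|_{L^q}$, while $\hat{u_\delta}(\xi) = \delta^3\hat{\varphi}(\delta^2\xi_1,\delta\xi_2)$ together with the substitution $\eta_1 = \delta^2\xi_1,\ \eta_2 = \delta\xi_2$ in the Plancherel representation of the quadratic form yields
\begin{align*}
\|u_\delta\|_{\mathcal{D}((-L_0)^{1/2})}^2 = \delta\!\int (\eta_1^2 + \eta_2^4)^{1/2}|\hat{\varphi}(\eta)|^2\,d\eta + \delta^3\|\varphi\|_{L^2}^2.
\end{align*}
Thus $\|u_\delta\|_{\mathcal{D}((-L_0)^{1/2})} \simeq \delta^{1/2}$ as $\delta \to 0$. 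If $\mathcal{D}((-L_0)^{1/2}) \subset L^s(S^2)$ were to hold for some $s > 6$, then the inequality $\delta^{3/s} \leq C\,\delta^{1/2}$ for all small $\delta$ would force $3/s \geq 1/2$, i.e.\ $s \leq 6$, a contradiction. Hence $L^6$ is optimal.

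The main obstacle is the endpoint upgrade from the weak-type bound $L^{3/2,\infty}\ast L^2 \hookrightarrow L^{6,\infty}$ to the strong-type $L^6$ bound in this anisotropic setting. The standard remedy is to perturb $\hat{k}$ to $(\xi_1^2 + \xi_2^4)^{-\alpha/2}$ for $\alpha$ in a small interval around $1/2$ and apply Marcinkiewicz interpolation; the weak-type bounds on the perturbed exponents come from the same anisotropic homogeneity argument, but one has to verify that the constants are uniform on that interval so that the interpolation closes, and that the cutoff in the partition of unity does not destroy the endpoint behaviour when patching back to $S^2$.
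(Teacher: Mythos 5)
Your proof is correct, but both halves take genuinely different routes from the paper's. For the embedding, the paper does not use the convolution kernel at the endpoint at all: it records the two pieces of information $u \in H^{1/2}_x(L^2_y)$ and $u \in L^2_x(H^1_y)$, interpolates between the mixed-norm Sobolev spaces to land in $H^{1/3}_x(H^{1/3}_y)$, and then applies the one-dimensional embedding $H^{1/3}(\mathbb{R}) \hookrightarrow L^6(\mathbb{R})$ in each variable separately. Your route instead sharpens the paper's own dyadic count on $k$ (which only yields $k \in L^r$ for $r < 3/2$, hence $u \in L^q$ for $q<6$) to the Lorentz-space statement $k \in L^{3/2,\infty}$ — indeed, $|k| \simeq 2^j$ on sets of measure $\simeq 2^{-3j/2}$ is exactly that statement — and then invokes the weak Young/O'Neil inequality. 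One remark: the ``main obstacle'' you flag is not actually an obstacle. Since $q = 2 > 1$ and $r = 6 < \infty$, you are not at an endpoint of the weak Young inequality, so $\|k \ast f\|_{L^6} \lesssim \|k\|_{L^{3/2,\infty}}\|f\|_{L^2}$ holds as a strong-type bound directly; no perturbation of the kernel exponent or auxiliary Marcinkiewicz interpolation is needed. For the sharpness, the paper scales \emph{up} ($u_r(x,y) = r\,u(r^4x, r^2y)$ with $r \to \infty$), which keeps each individual $u_r$ in every $L^{6+\varepsilon}$, and therefore must sum a disjointly supported sequence $\sum 2^{-n}u_{r_n}$ to exhibit an actual element of $\mathcal{D}((-L_0)^{1/2})$ outside $L^{6+\varepsilon}$; your small-$\delta$ concentration test gives a cleaner one-parameter contradiction, at the cost of invoking the closed graph theorem to convert the set inclusion $\mathcal{D}((-L_0)^{1/2}) \subset L^s$ into a norm inequality (a step worth stating explicitly, since the paper's construction is genuinely set-theoretic and avoids it). You should also note, as the paper does, that the full quadratic form in the chart involves $\|y\partial_x u_\delta\|_{L^2}$ as well; a quick change of variables shows this term also scales like $\delta^{1/2}$, so your conclusion stands.
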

\begin{proof}
%If the embedding $\Cal{D}((-L_0)^{1/2}) \subset L^6(S^2)$ is continuous, it is also compact, by similar arguments as appear before the beginning of Proposition \ref{Pro}. % because (CHCK!!!
%\[
%\Cal{D}((-L_0)^{1/2}) \subset H^{1/2}(S^2) \hookrightarrow L^6(S^2).
%\]
We start by observing that, similar to (\ref{Domjale}) above,
\[
\mathcal{D}((-L_0)^{1/2}) = \{u \in L^2(S^2): Yu, Zu \in L^2(S^2)\},
\]
where $Y, Z$ are respectively the restrictions on $S^2$ of the vector fields that generate rotations about the $y$-axis and the $z$-axis in $\RR^3$.\newline 
Ellipticity of $-L_0$ away from the poles $(0, \pm 1, 0), (0, 0, \pm 1)$ implies
\[
u \in \mathcal{D}((-L_0)^{1/2}) \Rightarrow \varphi u \in H^1(S^2),
\]
where $S = S^2 \setminus \{(0, \pm 1, 0), (0, 0, \pm 1)\}$ and $\varphi \in C^\infty_c(S)$. \newline
With that in place, we take a function $u \in \mathcal{D}((-L_0)^{1/2}))$ having small support in a neighborhood around any of the points in $S$, say, without loss of generality, $(0, 1, 0)$ and project it to $\mathbb{R}^2$ in the following way: let $\gamma_{xy}, \gamma_{yz}$ and $\gamma_{zx}$ denote the great circles on $S^2$ lying on the $xy, yz$ and $zx$-planes respectively. Then the projection takes a neighborhood of $(0, 1, 0)$ in $\gamma_{xy}$ onto the $y$-axis and a neighborhood of $(0, 1, 0)$ in $\gamma_{yz} $ onto the $x$-axis. This produces a compactly supported projected function on $\RR^2$, still called $u$ with mild abuse of notation, such that 
\beq\label{onep1}
u \in H^{1/2}(\mathbb{R}^2), \partial_yu \in L^2(\RR^2).
\eeq 
Also, since we have already asserted that $Zu \in L^2(S^2)$, this will give 
\beq \label{onep2}
(x\partial_y - y\partial_x)u \in L^2(\RR^2).
\eeq Since $u$ is compactly supported, $\partial_yu \in L^2(\RR^2) \implies x\partial_yu \in L^2(\RR^2)$, which, coupled with the last fact, implies, $y\partial_xu \in L^2(\RR^2)$. \newline
We will use the first two pieces of data, namely, $u \in H^{1/2}(\mathbb{R}^2)$ and $\partial_yu \in L^2(\RR^2)$. We observe that this means 
\beq\label{MRen}
u \in H^{1/2}_x(L^2_y) \cap L^2_x(H^1_y).
\eeq
Let us first justify this. \newline
$u \in L^2_x(H^1_y)$ actually means $\V\V u\V_{H^1_y}\V_{L^2_x} < \infty \ \Leftrightarrow \V\V (1 + \eta^2)^{\frac{1}{2}}\hat{u}^{y}\V_{L^2_\eta}|\V_{L^2_x} < \infty$, where $\hat{u}^{y}$ represents Fourier transform with respect to $y$, that is, $\hat{u}^{y}$ is now a function of $x$ and $\eta$.\newline
Now,
\begin{align*}
\V \V u\V_{H^1_y}\V_{L^2_x} & = \V \V(1 + \eta^2)^{\frac{1}{2}}\hat{u}^y\V_{L^2_x}\V_{L^2_\eta} \\
& = \V (1 + \eta^2)^{\frac{1}{2}}\V\hat{u}^{y}\V_{L^2_x}\V_{L^2_\eta} = \V (1 + \eta^2)^{\frac{1}{2}}\V\hat{u}\V_{L^2_\xi}\V_{L^2_\eta}\\
& = \V\V (1 + \eta^2)^{\frac{1}{2}}\hat{u}\V_{L^2_\xi}\V_{L^2_\eta} < \infty,
\end{align*}
since $(1 + \eta^2)^{1/2}\hat{u} \in L^2(\RR^2)$.\newline
Similarly, $u \in H^{1/2}_x(L^2_y) \Leftrightarrow \V (1 + \xi^2)^{1/4}\hat{u}^x(\xi)\V_{L^2_y} \in L^2_\xi$, where $\hat{u}^x$ means Fourier transform with respect to $x$ only. This holds iff $\V  \V ((1 + \xi^2)^{1/4}\hat{u}^x(\xi)\V_{L^2_y}\V_{L^2_\xi} < \infty$, which follows from $u \in H^{1/2}(\RR^2)$. This implies (\ref{MRen}).\newline
Now we propose to use interpolation (\cite{LM}, Chapter 4 has a detailed treatment of these sorts of spaces and allied results). By interpolation, we can say that for $\theta \in [0, 1]$
\beq\label{Part}
u \in H^{\frac{1}{2}\theta}(H^{1 - \theta}_y),
\eeq
where $H^r_x(H^s_y)$ denotes $H^s_y$-valued $H^r$-functions of $x$. This is because, 
\begin{align*}
u \in H^{\frac{1}{2}\theta}(H^{1 - \theta}_y) & \Leftrightarrow (1 + \xi^2)^{\theta/4}\hat{u}(\xi, y) \in L^2_\xi (H^{1 - \theta}_y)\\
& \Leftrightarrow (1 + \xi^2)^{\theta/4}(1 + \eta^2)^{(1 - \theta)/2}\hat{u}(\xi, \eta) \in L^2_\xi (L^2_\eta).
\end{align*}
But this follows by interpolation from $(1 + \xi^2)^{1/4}\hat{u}(\xi, \eta) \in L^2_\xi(L^2_\eta)$ and $(1 + \eta^2)^{1/2}\hat{u}(\xi, \eta) \in L^2_\xi(L^2_\eta)$.
Now, from (\ref{Part}), particularly for $\theta = 2/3$, we have
\[
u \in H^{1/3}_x(H^{1/3}_y).
\]
Now, when we use Sobolev embedding in one dimension, we know that  $H^{1/3}$ embeds in $L^6$.  That means, $u \in L^6_x(L^6_y)$, which implies, $u \in L^6(\mathbb{R}^2)$.
\newline
We will now prove the next part of the lemma: that the estimate of $u \in L^6$ as obtained above is sharp. Since $u$ has compact support, $\pa_y u \in L^2(\RR^2) \Rightarrow x\partial_y u \in L^2(\mathbb{R}^2)$, which, coupled with (\ref{onep2}) implies $y\partial_xu \in L^2(\mathbb{R}^2)$. We also have (\ref{onep1}). \newline
Let us define
\[
u(r, \sigma, a, b, x, y) =  r^\sigma u(r^ax, r^by).
\]
In the ensuing calculations, we will write, when convenient, $u(r, \sigma, a, b)$ for $u(r, \sigma, a, b, x, y)$ for ease of handling symbols.\newline
We have, 
\begin{align*}
\V \partial_yu(r, \sigma, a, b)\V^2_{L^2} & = \int_{\mathbb{R}^2}|\partial_yu(r, \sigma, a, b, x, y)|^2 dxdy = \int_{\mathbb{R}^2}|\partial_yr^\sigma u(r^ax, r^by)|^2dxdy\\ & = \int_{\mathbb{R}^2}|r^b\partial_{z_2}r^\sigma u(z_1, z_2)|^2r^{-a}r^{-b}dz_1dz_2\\ & = r^{b + 2\sigma - a}\V \partial_yu\V^2_{L^2}.
\end{align*}
Similarly, we can calculate,
\[
\V y\partial_xu(r, \sigma, a, b)\V^2_{L^2} = r^{2\sigma + a - 3b}\V y\partial_xu\V^2_{L^2}.
\]
Now, 
\begin{align*}
\hat{u}(r, \sigma, a, b, \xi, \eta) & \simeq \int_{\mathbb{R}^2}u(r, \sigma, a, b, x, y)e^{-i(\xi x + \eta y)}dxdy\\
& = \int_{\mathbb{R}^2}r^\sigma u(r^ax, r^by)e^{-i(\xi x + \eta y)}dxdy\\
& = \int_{\mathbb{R}^2}r^\sigma u(z_1, z_2) e^{-i(\frac{\xi}{r^a}z_1 + \frac{\eta}{r^b}z_2)}r^{-a}r^{-b}dz_1dz_2 \\
& = r^{\sigma - a - b}\hat{u}(r^{-a}\xi, r^{-b}\eta).
\end{align*}
So, 
\begin{align*} 
\V u(r, \sigma, a, b)\V^2_{H^{1/2}} & = \int_{\mathbb{R}^2}(1 + \xi^2 + \eta^2)^{1/2} r^{2\sigma - 2a - 2b}|\hat{u}(r^{-a}\xi, r^{-b}\eta)|^2d\xi d\eta\\
& = r^{2\sigma - 2a - 2b}\int_{\mathbb{R}^2}(1 + r^{2a}\theta^2 + r^{2b}\phi^2)^{1/2}|\hat{u}(\theta, \phi)|^2r^ar^b d\theta d\phi, \theta = r^{-a}\xi, \phi = r^{-b}\eta\\
& = \int_{\mathbb{R}^2}(r^{2(2\sigma - a - b)} + r^{4\sigma - 2b}\xi^2 + r^{4\sigma - 2a}\eta^2)^{1/2}|\hat{u}(\xi, \eta)|^2d\xi d\eta.
\end{align*}
We will want to compare this estimate with $\V u\V^2_{H^{1/2}} = \int_{\mathbb{R}^2}(1 + \xi^2 + \eta^2)^{1/2}|\hat{u}(\xi, \eta)|^2d\xi d\eta$.\newline
Also, on calculation, $\V u(r, \sigma, a, b)\V^p_{L^p} = r^{\sigma p - a - b}\V u\V^p_{L^p}$. \newline
Now, suppose that 6 is not a sharp exponent. We begin by choosing a $u \in \Cal{D}((-L_0)^{1/2})$ satisfying $u \in L^{6 + \varepsilon}$, where $\varepsilon > 0$. In the above equations, we let $\sigma = 1$ by observation. Then we see that for $a = 4$ and $b = 2$ (and calling $u(r, 1, 4, 2) = u_r$), we see that $\V \partial_y u_r\V_{L^2} = \V \partial_y u\V_{L^2}, \V y\partial_x u_r\V_{L^2} = \V y\partial_x u\V_{L^2}$ and $\V u_r\V_{H^{1/2}} \leq \V u\V_{H^{1/2}}$ when $r \geq 1$. \newline
On calculation, $\V u_r\V^{6 + \varepsilon}_{L^{6 + \varepsilon}} = r^{\varepsilon}\V u\V^{6 + \varepsilon}_{L^{6 + \varepsilon}}$. Clearly, as we let $r$ increase, the left hand side increases, with a fast decreasing support, since the support of $u$ was compact to begin with. \newline
Finally, to get a contradiction, we just have to take a sequence of $u_r$ for fast increasing $r$, with disjoint supports, and sum them up. To be precise, we already have $\V u_r\V_{L^{6 + \varepsilon}} = Kr^{\theta}$, where $K = \V u\V_{L^{6 + \varepsilon}}$ is a constant and $\theta = \frac{\varepsilon}{6 + \varepsilon} > 0$.\newline
Define a new function $u^*$ by $u^* = \Sigma \frac{1}{2^n} v_{r_n}$, where $r_n$ is chosen such that $2^{n - 1} \leq r_n^\theta < 2^n$ and $v_{r_n}$ is obtained by a translate of $u_{r_n}$ parallel to the $x$-axis, in such a way that all the $v_{r_n}$ have disjoint support. That way, we still preserve control over $\V\partial_y u^*\V_{L^2}, \V y\partial_x u^*\V_{L^2}$ and $\V u^*\V_{H^{1/2}}$, but the $L^{6 + \varepsilon}$-norm of $u^*$ blows up, contrary to our assumption.
\end{proof}
\subsection{Higher regularity in case of nonsmooth nonlinearity on $S^2$: a particular calculation}
It has already been shown that \[u \in \mathcal{D}((-L_0)^{1/2}) \Rightarrow u \in L^6.\] Now if $u$ solves (\ref{NLKGw}), then we can do better. A specific case ($p = 3$) has been worked out in ~\cite{T1} and it has been shown (using an elliptic bootstrapping argument) that $u$ is then smooth. Now, if $p$ is not an odd integer, we cannot expect a similar smoothness, because the nonlinearity of (\ref{NLKGw}) itself is then not smooth. However, we can expect higher Sobolev spaces and, in turn, higher $L^r$ spaces (by Sobolev embedding) for $u$ when $p$ is not an odd integer. Here, we calculate one explicit case, namely, $p = 4$. A word is in order regarding this choice. Firstly, let $P = -L_{2\lambda} + (m^2 - \lambda^2)$ and $F(u) = K|u|^{p - 1}u$ whence $Pu = F(u)$. Let $3 < p < 6$. \newline
Then, $F(u) \in L^{6/p}$. On calculation, $(L^{6/p})^{*} = L^{\frac{6}{6 - p}}$. By using the Sobolev embedding theorem, we can find a $\delta > 0$ such that $H^\delta \subset L^{\frac{6}{6 - p}}$. On calculation, this happens when $\delta > p/3 - 1$. So, by duality,
\beq\label{ebark1}
F(u) \in L^{6/p} \subseteq \displaystyle\bigcap_{\delta > p/3 - 1}H^{- \delta},
\eeq
which means
\beq\label{ebark2}
u = P^{-1}(F(u)) \subseteq \displaystyle\bigcap_{\delta > p/3 - 1}H^{1 - \delta}.
\eeq
The above claim comes from the fact that $P$ is hypoelliptic from H$\ddot{\mbox{o}}$rmander's condition. We see that $P$ is hypoelliptic if $L_{2\lambda}$ is. Since $-\Delta + X^2 = Y^2 + Z^2$, and $[Y, Z] = X$ (see the more general demonstration on page \pageref{Hypo}), $L_{2\lambda}$ is hypoelliptic. Also, from Theorem 1.8, Chapter XV of ~\cite{T2}, P is hypoelliptic  with the loss of a single derivative.\newline% (see Subsection \ref{LOD} of Appendix C).\newline
Note that we already know that $u \in H^{1/2}$. So this bootstrapping process yields something better than what we started with only when $\delta < 1/2$, or equivalently, $3 < p < 9/2$. So for an explicit demonstration we have chosen $p = 4$.\newline
When $p = 4$, according to previous calculation, 
\begin{eqnarray}\label{ebark3}
u = P^{-1}(F(u)) \subseteq \displaystyle\bigcap_{\delta > 1/3}H^{1 - \delta} = \displaystyle\bigcap_{\varepsilon > 0}H^{2/3 - \varepsilon}.
\end{eqnarray}
As argued before, $u \in H^1$ when $u$ is supported away from the poles. So, choose neighborhoods around the ``north pole'' of the 2-sphere in the following manner: $U, V$ and $W$ are open neighborhoods such that $V \subset \overline{V} \subset W \subset \overline{W} \subset U$. Also choose a smooth bump function $\phi$ such that $\text{supp }\phi \subset \overline{W}$ and $\phi \equiv 1$ on $\overline{V}$. Note that $\phi u$ satisfies (\ref{NLKGw}) inside $V$, so with a suitably chosen $\phi$ we can ensure that $\phi u \in \displaystyle\bigcap_{\varepsilon > 0}H^{2/3 - \varepsilon}(U)$.  \newline
Now we are going to determine if $\phi u$ belongs in a higher Sobolev space. Surely, $\phi u$ will not solve (\ref{NLKGw}) on $U$, but that is fine. All we want to investigate is the behavior of $u$ around the pole, which can be tracked by the behavior of $\phi u$ inside $V$. Now, projecting down $U$ on the plane, we see that the projection of $\phi u$, called $v$, satisfies $\partial_y v \in L^2$ and $v$ has compact support. This implies, by the interpolation procedure on mixed Sobolev spaces carried out in the proof of Lemma \ref{seshlabel}, that,
\beq\label{r10}
v \in L^r, r < 10.
\eeq
Let us argue how this goes. We know $v \in \bigcap_{\varepsilon> 0} H^{2/3 - \varepsilon}$ and $ \pa_yv \in L^2$. By arguments outlined in the proof of Lemma \ref{seshlabel}, that means
\beq
v \in H^{2/3 - \varepsilon}_x(L^2_y) \cap L^2_x(H^1_y), \forall \varepsilon > 0.
\eeq
By interpolation,
\beq 
v \in H^{(2/3 - \varepsilon)\theta}_x(H^{1 - \theta}_y), \forall \varepsilon > 0, \theta \in [0, 1].
\eeq
Choosing $\theta = \frac{1}{5/3 - \varepsilon}$, we finally get that 
\beq 
v \in H^{(2/3 - \varepsilon)(\frac{1}{5/3 - \varepsilon})}_x(H^{(2/3 - \varepsilon)(\frac{1}{5/3 - \varepsilon})}_y), \forall \varepsilon > 0.
\eeq
Sobolev embedding then gives (\ref{r10}).\newline
(\ref{r10}), in turn, through the bootstrapping procedure given by (\ref{ebark1}), (\ref{ebark2}) and (\ref{ebark3}), implies that $v \in \displaystyle\bigcap_{\varepsilon > 0}H^{4/5 - \varepsilon}$, which means, $u \in \displaystyle\bigcap_{\varepsilon > 0}H^{4/5 - \varepsilon}$. This is a gain in regularity.
\section{\bf $\langle X, X\rangle = 1$ vis-a-vis contact structures}\label{CP}
When $\lambda = 0$, we can extend the analysis done till now to a much larger class of manifolds to get an existence result for the NLKG. These are the class of so-called K-contact manifolds with an associated metric $g$. To recall the definitions:
\begin{definition}
A contact manifold $(M^{2n + 1}, \eta)$, with characteristic or Reeb vector field $\xi$ has an associated Riemannian metric $g$ if $\eta(X) = g(X, \xi)$ and there exists a tensor field of type $(1, 1)$ such that $\phi^2 = - I + \eta \otimes \xi$, $d\eta(X, Y) = g(X, \phi Y)$. This is called a contact metric structure. 
A contact metric structure $(M, \phi, \xi, \eta, g)$ is called a K-contact structure if the Reeb vector field $\xi$ is a Killing field (with respect to $g$). 
\end{definition}
Examples of K-contact manifolds abound in literature (see ~\cite{B}). This includes in particular the Sasakian manifolds, and more particularly, the odd dimensional unit spheres (the word ``unit'' is important, for otherwise the metric from Euclidean space will not be an associated one). \newline
Under this setting, let us consider the Reeb vector field, say $X$, on $(M, \phi, X, \eta, g)$ and consider $L_0 = \Delta - X^2$, where $\Delta$ is the Laplace-Beltrami operator on $M$. Since $\eta(X) = 1$, we see that $X$ has unit length throughout. It has been shown by ~\cite{BD}, using a result of Radkevic, that $L_0$ is subelliptic of order $1/2$. So by a result of Kohn-Nirenberg, we can say that the subLaplacian $L_0$ is hypoelliptic with the loss of a single derivative 
(also from the subelliptic estimates it can be proved that $L_0$ has discrete spectrum, as has been proved in ~\cite{BD}).\newline
Now, we can either make the technical assumption that $L_\alpha = L_0 - i\alpha X$ is negative semidefinite, %as is the case with the spheres (can be calculated from spherical harmonics), 
or we can treat travelling wave solutions of NLKG with $\lambda = 0$. Under this assumption, using arguments quite similar to what has gone before, we can then give an existence statement of the NLKG on K-contact manifolds with an associated metric. \newline
However, as we show below, in specific cases of manifolds with locally contact structures where the spectrum can be evaluated explicitly, we can say more. We again make a mention of ~\cite{T1}, which investigates this phenomenon on $M = S^3$. The setup there is as follows: the group $SU(2)$, with its bi-invariant Riemannian metric, is isometric to $S^3$ and is a cover of $SO(3)$. Call $X, Y$ and $Z$ the left-invariant vector fields on $SU(2)$, covering vector fields on $SO(3)$ that generate $2\pi$-periodic rotations of $R^3$ about the $x, y$ and $z$-axis, respectively. Then the Laplacian on $S^3$ can be written as 
\begin{eqnarray}
\Delta = X^2 + Y^2 + Z^2,
\end{eqnarray} 
\begin{eqnarray}
\langle X, X\rangle = \langle Y, Y\rangle = \langle Z, Z\rangle = 1 \mbox{  on  } S^3,
\end{eqnarray}
so a solution to the NLKG under these conditions can be called sonic wave solutions.\newline
Call, as before, $L_0 = \Delta - X^2$. The main result is as follows:
\begin{proposition}(~\cite{T1})
\beq\label{erseshkothay}
\Cal{D}((-L_0)^{1/2}) \hookrightarrow L^q(S^3), \text{   } q \in [2, 4),
\eeq
and the inclusion is compact. Then, under the assumptions 
\beq\label{sharpt2}
1 < p < 3
\eeq and
\[
|\lambda| < \frac{1}{2}, m^2 > \lambda^2,\]
(\ref{NLKGw}) has a nonzero solution $u \in \Cal{D}((-L_0)^{1/2})$.
\end{proposition}
\subsection{Estimates on $S^7$}
%We start by noting that ~\cite{T1} already deals with the subelliptic phenomenon on $S^3$. To recap the salient features, the Laplacian on $S^3$ was written as 
%\begin{eqnarray}
%\Delta = X^2 + Y^2 + Z^2,
%\end{eqnarray} 
%where $X, Y$ and $Z$ are the left-invariant vector fields on $SO(3)$ that generate $2\pi$ periodic rotations of $\mathbb{R}^3$ about the respective axes. Note that here we have 
%\begin{eqnarray}
%\langle X, X\rangle = \langle Y, Y\rangle = \langle Z, Z\rangle = 1 \mbox{  on  } S^3,
%\end{eqnarray}
%so a solution to the NLKG under these conditions can be called sonic wave solutions. Then, in line with the argument made for $S^2$, an existence result for the (\ref{NLKGw}) is formulated under certain restrictions on $\lambda, m$ and $2 < p + 1 < q_*$.
Here we will attempt to give an analysis on 
$M = S^7$ along somewhat similar lines. Note, however, that the above analysis on $S^3$ depends heavily on the fact that it can be given a Lie group structure. As is well known, $S^1$ and $S^3$ are the only such spheres. So, in certain details, our investigation has to diverge. We also mention that here we will give a sharp estimate corresponding to (\ref{erseshkothay}), and our method will also prove that the estimate (\ref{erseshkothay}) in ~\cite{T1} is actually sharp.\newline In our situation, what will come in handy is the fact that $S^7$ has certain homogeneous space-like properties\footnote{The reason being, $S^7$ has a similar connection with the octonions as has $S^3$ with the quaternions.}, one of them being: $S^7$ has a global orthonormal frame of Killing fields which do not commute. This allows us to write the Laplacian on $S^7$ as $\Delta = X^2_1 + X^2_2 + .... + X^2_7$ where $X_i$ generate the global orthonormal Killing frame. If we consider $S^7$ as the sphere in $\RR^8$ centred at the origin, then at the point $P \in S^7$ having coordinates $P = (x_1, x_2,.., x_8)$, we can take \[
X_1 = (x_2, -x_1, x_4, -x_3, x_6, -x_5, x_8, -x_7),\] \[X_2 = (x_3, -x_4, -x_1, x_2, x_7, -x_8, -x_5, x_6),\] \[ X_3 = (x_4, x_3, - x_2, -x_1, -x_8, -x_7, x_6, x_5),\]\[ X_4 = (x_5, -x_6, -x_7, x_8, -x_1, x_2, x_3, -x_4),\]\[ X_5 = (x_6, x_5, x_8, x_7, -x_2, -x_1, -x_4, -x_3),\]\[X_6 = (x_7, -x_8, x_5, -x_6, - x_3, x_4, -x_1, x_2),\]\[X_7 = (x_8, x_7, -x_6, -x_5, x_4, x_3, -x_2, -x_1).\]
Let us mention as a side remark that the existence of global Killing frames is a stronger condition than being able to write the Laplacian as a sum of squares in local coordinates; for details, see ~\cite{DN}. They show, among other things, that $S^1, S^3$ and $S^7$ are the only spheres that possess global orthonormal Killing frames. \newline
Letting $X = X_1$ without loss of generality, $L_0 = \Delta - X^2$, and $L_\alpha = L_0 - i\alpha X$, and following the lines of calculation in Section \ref{SPS}, 
%and using the table under Example 1.5 of ~\cite{DN}, 
we see that $-L_\alpha$ is positive semidefinite when $|\alpha| < 6$, whence by the spectral theorem we can define $(-L_0)^{1/2}$.\newline
Now, let $q_*$ be the optimal (greatest) number such that
\beq\label{OPTI3}
\mathcal{D}((-L_0)^{1/2}) \subset L^q(S^7), \forall \mbox{   } q \in [2, q_*)
\eeq
is compact. We can see that if the inclusion (\ref{OPTI3}) holds, it is continuous via the closed graph theorem applied to the inclusion operator. For the norm on $\Cal{D}((-L_{0})^{1/2})$, we use the graph norm given by
\[
\V u\V_{\Cal{D}((-L_{0})^{1/2})}^2 = ((-L_0)^{1/2} u, (-L_0)^{1/2} u) + (u, u),\]
which turns $\Cal{D}((-L_{0})^{1/2})$ into a Hilbert space (see Proposition 1.4 of ~\cite{Sc}). Then, as argued before (see page \pageref{compactn}), proving the inclusion (\ref{OPTI3}) will prove the compactness of the same.\newline
So, our existence result is:
\begin{proposition}\label{esseven} ({\bf Existence result on $S^7$})
With $X = X_1$ as above (without loss of generality), assume
\[
2 < p + 1 < q_*\text{   (as in  } (\ref{OPTI3})).\]
Also assume,
\[
|\lambda| < 3,\text{    }m^2 > \lambda^2.
\]
Then, given $K > 0$, the equation
\[
-L_{2\lambda}u + (m^2 - \lambda^2)u = K|u|^{p - 1}u
\]
has a nonzero solution $u \in \mathcal{D}((-L_0)^{1/2})$.
\end{proposition}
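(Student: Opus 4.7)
The plan is to follow the template of Proposition~\ref{Pro} with the octonionic parallelization of $S^7$ playing the role that the rotational symmetry played for $S^n$. Writing the global orthonormal Killing frame as $X_1, \ldots, X_7$ with $X_1 = X$, we have $\Delta = \sum_{i=1}^{7} X_i^2$, so that $L_0 = \Delta - X^2 = \sum_{i=2}^{7} X_i^2$ is a sum of six squared Killing vector fields. I would first verify that $X_2, \ldots, X_7$ together with their iterated Lie brackets span the tangent space at every point of $S^7$: this is where the octonionic parallelization from~\cite{DN} enters, since at any point some commutator $[X_i, X_j]$ produces a nonvanishing component in the missing $X_1$-direction, driven by the non-associativity of octonionic multiplication. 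By homogeneity of $S^7$ under the transitive action of $\SO(8)$, it is enough to check this at a single base point. H\"ormander's theorem then yields that $L_0$ is subelliptic of order $1/2$ and hypoelliptic with loss of one derivative.

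Having secured this, I would extract the subelliptic estimate $(-L_0 u, u) \gtrsim \|u\|^2_{H^{1/2}(S^7)}$ on functions orthogonal to the constants (the one-dimensional kernel of $L_0$), producing a compact embedding $\mathcal{D}((-L_0)^{1/2}) \hookrightarrow L^q(S^7)$ for all $q \in [2, q_*)$. The positive semidefiniteness of $L_\alpha = L_0 - i\alpha X$ for $|\alpha| < 1$ then comes from spherical-harmonic decomposition: since $X$ is Killing it commutes with $\Delta$ and preserves each eigenspace $V_k$ of degree-$k$ harmonic polynomials restricted to $S^7$, and an eigenvalue analysis on each $V_k$---parallel to the lemma preceding Proposition~\ref{Pro} but now using $\langle X, X \rangle \equiv 1$ and the specific action of the octonionic-frame field on the harmonics---yields the bound. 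The constraint $|\lambda| < 1/2$ in the statement is exactly the translation of $|\alpha| = |2\lambda| < 1$ for the operator $L_{2\lambda}$ appearing in the equation.

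These two facts, together with $m^2 > \lambda^2$, give the norm equivalence
\[
F_{m, \lambda, X}(u) \approx \|u\|^2_{\mathcal{D}((-L_0)^{1/2})},
\]
so the problem reduces to minimizing $F_{m, \lambda, X}$ over $\mathcal{D}((-L_0)^{1/2})$ subject to $\int_{S^7}|u|^{p+1}\,dS = \beta$. For $p + 1 < q_*$ the compact embedding lets us extract a strongly convergent subsequence of a minimizing sequence (as in Proposition~\ref{99}), and the Lagrange-multiplier computation of Lemma~\ref{98} then shows the minimizer $u^*$ solves $-L_{2\lambda} u + (m^2 - \lambda^2) u = K|u|^{p-1} u$; any positive $K$ can be reached by rescaling $\beta$.

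The main obstacle I anticipate is the first step: verifying H\"ormander's bracket condition for the specific octonionic frame on $S^7$, and then pinning down the range $|\alpha| < 1$ for positive semidefiniteness. Both require genuinely new input beyond the $S^n$ case of Proposition~\ref{Pro}---from the structure theory of the octonions and from the explicit action of the parallelization fields on the $\SO(8)$-irreducible spherical harmonics---and neither has a clean analogue on a general Lie group, since $S^7$ is parallelizable without being a Lie group. Everything downstream of these two facts is a routine transcription of the $S^n$ argument.
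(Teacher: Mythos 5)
Your proposal follows essentially the same route as the paper: Hörmander's sum-of-squares theorem applied to $L_0 = \Delta - X_1^2 = \sum_{i=2}^{7} X_i^2$ using the D'Atri--Nickerson orthonormal Killing frame, the resulting subelliptic estimate $-(L_0u,u)\gtrsim \|u\|^2_{H^{1/2}}$ on mean-zero functions, positive semidefiniteness of $L_\alpha$ for $|\alpha|<1$ via the action on spherical harmonics, and then the constrained-minimization scheme of Proposition~\ref{Pro}. The paper's own proof is just a terser version of exactly these steps, so your write-up is a faithful (and more explicit) rendering of it.
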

\begin{proof}
We apply H{\"o}rmander's sum of squares (see \cite{H}) on $L_0 = \Delta - X^2$ where $X = X_1$ without loss of generality, to conclude that it is hypoelliptic with the loss of one derivative.\newline %We quickly write down the conclusions:
%\[
%\mathcal{D}(L_0) \subset H^1(S^7), \mbox{   hence   } \mathcal{D}((-L_0)^{1/2}) \subset H^{1/2}(S^7),\]
%\[
%-(L_0 u, u) \geq C||u||^2_{H^{1/2}} \mbox{   if   } \int_{S^7}u dV = 0\]
%\[
%-(L_\alpha u, u) \geq C_\alpha||u||^2_{H^{1/2}} \mbox{   if   } \int_{S^7}u dV = 0
%\]
%Also, positive semidefiniteness of $L_\alpha$ for $\alpha < 1$ follows from spherical harmonics and the particular form of $X$. Following the line of argument in the proof of Proposition (\ref{Pro}), this is all we needed to establish.
Then we use the fact that 
\begin{eqnarray}\label{GAPRE1}
F_{m, \lambda, X}(u) = (-L_{2\lambda} u, u) + (m^2 - \lambda^2)(u, u) \cong \V u\V^2_{\mathcal{D}((-L_{2\lambda})^{1/2})},
\end{eqnarray}
where $\V u\V^2_{\mathcal{D}((-L_{2\lambda})^{1/2})}$ is the graph norm given by 
\[
\V u\V^2_{\mathcal{D}((-L_{2\lambda})^{1/2})} = ((-L_{2\lambda})^{1/2} u, (-L_{2\lambda})^{1/2} u) + (u, u),
\]
which turns $\mathcal{D}((-L_{2\lambda})^{1/2})$ into a Hilbert space. \newline
Let
\[
I_\beta = \inf \{F_{m, \lambda, X}(u) : u \in \mathcal{D}((-L_{2\lambda})^{1/2})\},
\]
under the constraint (\ref{ic}). Now, take a sequence of functions $u_\nu \in \mathcal{D}((-L_{2\lambda})^{1/2})$ such that $F_{m, \lambda, X}(u_\nu) \to I_\beta$. Then, (\ref{GAPRE}) implies that $\V u_\nu\V_{\Cal{D}((-L_{2\lambda})^{1/2})}$ is uniformly bounded, which in turn means (a subsequence of) $u_\nu$ weakly converges to $u \in \mathcal{D}((-L_{2\lambda})^{1/2})$. By virtue of the compactness of (\ref{OPTI2}), $u_\nu$ has a subsequence, still called $u_\nu$ with mild abuse of notation, that is strongly $L^{p + 1}$ convergent to $u$, where $p + 1 < q_*$, meaning that $\V u\V^{p + 1}_{L^{p + 1}} = \beta$, as in (\ref{ic}). Also, 
\[
F_{m, \lambda, X}(u) \leq \liminf F_{m, \lambda, X}(u_\nu).
\]
So, $u \in \mathcal{D}((-L_{2\lambda})^{1/2}))$ gives a constrained minimizer to $F_{m, \lambda, X}(u)$ subject to (\ref{ic}).
%For $p$ such that $\mathcal{D}((-L_0)^{1/2}) \subset L^{p + 1}(S^n)$, we can pick any positive number $\beta$ and minimize $F_{m, \lambda, X}$ under the constraint 
%\begin{eqnarray}
%\int_{S^n} |u|^{p + 1} dS = \beta
%\end{eqnarray}
The constrained minimizer will give a solution to (\ref{NLKGw}), as wanted.
\end{proof}

\begin{remark}
Note that the above proposition is {\bf not} a special case of Proposition (\ref{Pro}), because of the very different natures of the sub-Laplacians and merits its own different proof. 
\end{remark}

\subsection{Optimal exponent $q^*$}\label{sevenop}
For us, the problem is again to determine how high $q_*$ can be.
In this case the Sobolev Embedding yields, 
\[
H^{1/2}(S^7) \subseteq L^{7/3}(S^7)
\]
However, we have the following
\begin{lemma}\label{sevenop1}
\beq\label{pcwp}
H^{1/2}(S^7) \hookrightarrow L^q, \text{   } q < 8/3,
\eeq
and the inclusion is compact. Also, this is optimal. That is, 
\[
H^{1/2}(S^7) \subset L^{q} \Longrightarrow q < 8/3.
\]
\end{lemma}
\begin{proof}
By similar arguments as outlined just before Proposition \ref{Pro}, we can assert that proving the inclusion (\ref{pcwp}) will automatically show that it is compact.\newline
We observe that we can localize an open set of $S^7$ to an open set in $\mathbb{H}^7$ with coordinates $(p, q, t)$ such that 
\[
(1 - L_0)^{-1/2}u(z) = k(z, .)\ast u(z)
\]
plus lower order terms, where $\hat{k} \in C^{\infty}(\mathbb{R}^7 \setminus \{0\})$ satisfies the following anisotropic homegeneity:
\[
\hat{k}(sx, sy, s^2\tau) = s^{-1}\hat{k}(x, y, \tau),
\]
which means, 
\begin{eqnarray}\label{S}
k(\lambda p, \lambda q, \lambda^2 t) = \lambda^{-7}k(p, q, t),
\end{eqnarray}
where $p$ and $q$ refer to triplets of real numbers and $(p, q, t) \in \mathbb{H}^7$, the Heisenberg group of dimension 7.
For details on these kinds of calculations, see ~\cite{T3} and ~\cite{FS}. %A brief comment on the heuristics of this approach: just as a symplectic manifold can be modelled locally by $\mathbb{R}^{2n}$ with the usual symplectic structure, the Heisenberg group also forms a very convenient local model for manifolds with contact structure. Very briefly, a Heisenberg group $\mathbb{H}^n$ as a $C^\infty$ manifold is $\mathbb{R}^{2n + 1}$. If we denote points in $\mathbb{H}^n$ by $(p_j, q_j, t_j)$ with $t \in \mathbb{R}, p_j, q_j \in \mathbb{R}^n$ then define the (non-commutative) group operation on $\mathbb{H}^n$ by 
%\[
%(p_1, q_1, t_1).(p_2, q_2, t_2) = (p_1 + p_2, q_1 + q_2, t_1 + t_2 + \frac{1}{2}p_1.q_2 - \frac{1}{2}p_2.q_1).
%\]
See also ~\cite{P}.\newline
We now want to see what (\ref{S}) implies.  Let us assume without loss of generality that 
\[
k(p, q, t) = 1 \mbox{   when   } |p|^2 + |q|^2 + t^2 = 1.
\]
If we let $\Omega_0 = \{(p, q, t): 1 \leq k(p, q, t) \leq 8\}$ and define $\Omega_k$ inductively as the image of $\Omega_{k - 1}$ under the map
\[
(p, q, t) \mapsto (2^{-1}p, 2^{-1}q, 2^{-2}t),
\]
we see that 
\[
\mbox{Vol   }\Omega_k = 2^{-8}\mbox{Vol   }\Omega_{k - 1} = C2^{-8k}.
\]
Hence, with $B = \cup_{k \geq 0}\Omega_k$, on calculation, we have
\[
\int_B |k(p, q, t)|^p dV \leq C\sum_{k \geq 0} 2^{7pk}2^{-8k} < \infty,
\]
which means that 
\[
k \in L^p_{loc}(\mathbb{H}^7) \mbox{    for    } p < \frac{8}{7}.
\]
By interpolation we find that $k \ast u \in L^q_{loc}(\mathbb{H}^7)$ for $q < \frac{8}{3}$. \newline
We will prove that the above estimate of $p < \frac{8}{7}$ is optimal. The derivation will employ a similar scaling trick as used before, though the calculations will be simpler. Observe that the integration here takes place on a Heisenberg group, but we note that the ordinary Lebesgue measure on $\mathbb{R}^{n}$ gives the Haar measure on $\mathbb{H}^n$, and the Heisenberg group is unimodular. So we can transform coordinates and repeat similar calculations that we did for $\mathbb{R}^2$.\newline
Define 
\beq\label{sharpt1}
k(r, \sigma, p, q, t) = rk(r^\sigma p, r^\sigma q, r^{2\sigma}t).
\eeq
The right hand side is equal to $r.r^{-7\sigma}k(p, q, t) = r^{(1 - 7\sigma)}k(p, q, t)$ by anisotropic homogeneity. Also $k(r, \sigma, p, q, t) \in C^{\infty}(\mathbb{H}^7\setminus 0)$ for all $r, \sigma$. \newline
Now we calculate the local $L^p$-norm of $k(r, \sigma)$ in two different ways:\newline
Firstly,
\begin{align}\label{and1}
\nonumber\V k(r, \sigma)\V^p_{B(0, r)} & = \int_{B(0, r)}|k^r_\sigma|^pdV \\\nonumber
& = \int_{B(0, r)}r^{p - 7\sigma p}|k(p, q, t)|^pdV\\
& = r^{(1 - 7\sigma)p}\V k\V^p_{B(0, r)},
\end{align}
the second step coming from the anisotropic homogeneity.\newline
Again
\begin{align}\label{and2}
\nonumber\V k(r, \sigma)\V^p_{B(0, r)} & = \int_{B(0, r)}|rk(r^\sigma p, r^\sigma q, r^{q\sigma}t)|^pdV\\
\nonumber& = \int_{B(0, r')}r^p|k(p', q', t')|^pr^{-8\sigma}, r' > r\\
& = r^{p - 8\sigma}\V k\V^p_{B(0, r')},
\end{align}
the second step coming from a change of variables. \newline
Comparing (\ref{and1}) and (\ref{and2}), we see that $(1 - 7\sigma)p > p - 8\sigma$, meaning $p < \frac{8}{7}$. This also takes care of the endpoint case of $8/7$.
\end{proof}
\begin{remark}
One observes that similar scaling tricks, as in (\ref{sharpt1}), can be used to show that the estimate of $1 < p < 3$ as in (\ref{sharpt2}) (which is quoted from Proposition 4.2 of ~\cite{T1}) is sharp. 
\end{remark}

\section{\bf Results in the non-compact setting: Main theorems}
\label{NCR}
In this section, we enter into our main theorems, which deal with constrained minimizing solutions of (\ref{NLSw}) and (\ref{NLKGw}) on non-compact manifolds. As mentioned at the outset of this paper, we extend results from ~\cite{T1} and also results proved earlier in this paper to a non-compact setting. More precisely, we will try to repeat the analysis of Section 1 of ~\cite{T1} and Section 1.3 of this thesis in the case of non-compact manifolds $M$. Before we begin, here is a heuristic story. What will cost us most dearly in the non-compact setting is the failure of compact Sobolev embedding. That means, we have to find out means of exercising some control over functions outside a large compact set. We do this in two different ways: one is to consider constrained minimization in a subclass of $H^1(M)$ functions and impose appropriate geometric restrictions on the manifold $M$ that will make elements of the said subclass vanish at infinity, and this will make the failure of the Sobolev embedding a manageable problem. This we do in the case of $F_{m, \lambda, X}$ minimizers in Subsection \ref{MTIS}. Another way to proceed is to use concentration compactness arguments in the presence of a certain geometric homogeneity of the space $M$. In that case, once we prove concentration, we can use the geometric homogeneity of the space to bring all the zones of concentration within a compact region and get compact Sobolev embedding into action. This we implement in the case of constrained $\Cal{E}_{\lambda, X}$ minimizers in Subsection \ref{MTIIS}.
\subsection{$F_{m, \lambda, X}$ minimizers}\label{MTIS}
Here we consider non-compact manifolds $M$ with $C^\infty$ bounded geometry which are of the form $[0, \infty) \times N$. Here $N$ is assumed to be compact and $(n - 1)$ dimensional, and $M$ has the product metric $g = dr^2 + \phi(r) g_N$, where $\phi$ is smooth and positive with $\phi(1) = 1$, and $g_N$ denotes the metric on $N$. %We also make the additional assumption that $\phi(0) = 0$ to avoid dealing with a boundary. %Spaces of the form $N \times \mathbb{R}$ can be dealt with by stitching these together. 
Also, we assume that $M$ is complete, and all the points $(0, x), x \in N$ are identified to a single point (which could be thought of as the origin). The last two assumptions respectively mean that we do not have to worry about cone points and boundaries.\newline
If $X$ is a Killing field on $(N, g_N)$, consider the push-forward vector field $X_r$ on $(N, \phi(r) g_N)$, that is, $X_r = i_{r_*}X$, $i_r : (N, g_N) \to (N, \phi(r) g_N)$ being the identity map. This induces a Killing field on $M$, still called $X$ by abuse of notation. Note that, for $r \in [0, \infty), x \in N$, $X_{(r, x)} = \sqrt{\phi(r)}X_{(1, x)}$. We will consider only those $M$ which have bounded geometry and those functions $\phi$ such that $\langle X, X\rangle \leq b^2 < 1$. As an example of the kind of space we are talking about, consider the cylinder $[1, \infty) \times S^1$ fitted with a hemispherical cap (diffeomorphic to the closed 2-disc) to make it complete. Here $X$ is given by (slow) rotation about the axis of the cylinder.\newline
% Clearly, there are plenty of such manifolds.
In general (see ~\cite{CMMT}, Section 2.3, for example), we should not expect minimizers of $F_{m, \lambda, X}$ on $H^1(M)$ when $M$ is complete and non-compact, even if it has rotational symmetry. However, we can minimize $F_{m, \lambda, X}$ on the class of radial functions which are in $H^1(M)$; that is, we will try to minimize $F_{m, \lambda, X}$ over\label{sym14}
\[
H^1_r(M) = \{u \in H^1(M) : u \mbox{ is a radial function}\}.
\]
A word is in order regarding what is meant by a radial function. Here it means those functions which are dependent only on the variable $r$ running over $[0, \infty)$ of the space $M = [0, \infty) \times N$, i.e., we are considering only those functions $f$ for which $f(r, x) = \varphi(r)$. Also, if $A(r)drdN$ represents the volume form of $M$, then by calculation, we have $A(r) = (\phi(r))^{\frac{n - 1}{2}}$.\newline
To work out constrained $F_{m, \lambda, X}$ minimizers, we first need a lemma:
\begin{lemma}\label{Cond} Consider a non-compact complete manifold $M$ of dimension $n$ satisfying the properties described at the beginning of this subsection. Also, assume a positive lower bound on $\phi(r)$ outside a compact set, say, when $r > 1$. Then, if $f \in H^1_r(M)$, $f$ vanishes at infinity.
\end{lemma}
\begin{proof} %It will be clear that this statement is akin to a statement in [S]. 
We start by justifying that $f \in H^1_r(M) \Rightarrow f \in C(M\setminus U)$, where $U$ is a neighborhood of the origin, let us say, without loss of generality, a ball of radius 1. The argument behind this is essentially local. Choose $(r', x') \in M$ and a small open ball $B = (r' - \delta, r' + \delta) \times V$ around $(r', x')$, where $V$ is open in $N$. We can see that $u \in H^1(B) \Rightarrow u(r, x') \in H^1((r' - \delta, r' + \delta))$, and since all the components of the metric tensor $g$ are uniformly bounded on $B$, one-dimensional Sobolev embedding gives $u(r, x') \in C((r' - \delta, r' + \delta)) \Rightarrow u \in C(B)$ . %We also see immediately that it suffices to consider only those functions $f$ such that $f$ has some derivatives with respect to the radial variable. This is in analogy with the following: when we have a continuous function $g$ on $\mapthbb{R}$, we can approximate $g$ (in the uniform norm) by smooth $g_n$ on $[-n, n]$ (by the Stone-Weierstrass theorem, for example) and then paste these $g_n$ together by a partition of unity subordinate to the open cover $\{(-n, n)\}$.\newline
Also, since functions in $C(M \setminus U)$ can be uniformly approximated by functions in $C^\infty(M \setminus U)$, we can assert that it is enough to prove the lemma for $f \in C^1(M \setminus U) \cap H^1_r(M)$.\newline
Now, if $f$ does not vanish at infinity, then, there exists an $\varepsilon > 0$ such that no matter what compact set in $M$ we select, $f$ attains a value greater than $\varepsilon$ outside this compact set. By scaling the function if necessary, we can use $\varepsilon = 1$. \newline Let $q_k \in M$ be a sequence of points satisfying the following:\newline
(a) $q_k$ has coordinates $(r_k, x)$, $r_k \in (0, \infty)$, $x \in N$ (fixed), such that $r_k$ is a strictly increasing sequence in $k$,\newline
(b) dist$(q_k, q_{k + 1}) > 2$ for all $k$,\newline
(c) $f(q_k) > 1$ for all $k$,\newline
%(d) there exists $p_k = (r_k', x)$ such that $r_k < r'_k < r_{k + 1}$ and $f(p_k) < 1/2$\newline
(d) there exist annuli $D_k = (r_k - s_k, r_k + s'_k) \times N, s_k, s'_k > 0$ such that $f$ falls below $1/2$ somewhere inside each $D_k$ and the $D_k$'s do not intersect each other, and \newline
(e) $|D_k|$ is bounded above by a positive constant. \newline
%Now then, subdivide $M$ into circular discs (with center at the origin) around each $q_k$ and call these discs $D_k$. $D_k$ are chosen such that $f$ falls below 1/2 somewhere inside each $D_k$ and also $D_k \cap D_{k + 1} = \{\}$ for all $k$.\newline 
Clearly\label{sym3}, 
\begin{align*}
\int_{D_k}|\nabla f|^2A(r)drdN & \geq C_k (\int_{D_k}|\nabla f|A(r)drdN)^2 \text{   (using (e))  }\\
& \gtrsim C_k(\int_{r_k - s_k}^{r_k + s'_k} |\nabla f|A(r)dr)^2 \gtrsim (\int_{r_k - s_k}^{r_k + s'_k} |\nabla f|dr)^2\\
& \gtrsim 1/4 \text{    (using (c) and (d))   }.
\end{align*}
where $C_k = \frac{1}{|D_k|}$ is bounded below, since $|D_k|$'s are bounded above. Since this is happening for all $k$, this will contradict the fact that $f \in H^1_r(M)$.\newline
We must point out that (d)  and (e) above hold necessarily, as otherwise, we will have a sequence of annuli $B_k$ such that $|f| > 1/2$ on $B_k$ and $|B_k| \to \infty$. That will imply $f \notin H^1_r(M)$.\newline 
\end{proof}
Here, we have assumed a lower bound on the function $A(r)$. To give some alternative criteria under which we can force $f$ to vanish at infinity, we refer to Lemma 2.1.1 from [MT], which says the following:
\begin{lemma} Assume that $A(r)$ satisfies either 
\[
\int_{|r| \geq 1}\frac{dr}{A(r)} < \infty
\]
or
\[
\lim_{|r| \to \infty} A(r) = \infty, \mbox{    and    } \sup_{|r| \geq 1}\bigg|\frac{A'(r)}{A(r)}\bigg| < \infty.
\]
Then
\[
f \in H^1_r(M) \Rightarrow f|_{M_1} \in C(M_1) \mbox{    and    }
\]
\[
\lim_{|r| \to \infty}|f(r)| = 0,
\]
where $M_1$ consists of all the points of $M$ having $r$-coordinates $\geq 1$.
\end{lemma}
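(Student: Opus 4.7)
My plan is to treat the continuity assertion and the decay at infinity separately, invoking one-dimensional arguments in the radial variable since $f$ depends only on $r$. For continuity on $M_1 = N \times [1,\infty)$, I would reuse the observation from the preceding lemma: the conditions $\int |f|^2 A(r)\,dr < \infty$ and $\int |f'|^2 A(r)\,dr < \infty$, together with the smoothness and positivity of $A$ on $[1,\infty)$, reduce locally to the ordinary one-dimensional $H^1$ condition for the radial profile of $f$, and the one-dimensional Sobolev embedding $H^1_{\text{loc}}([1,\infty)) \hookrightarrow C^0([1,\infty))$ then delivers continuity of $f$ on $M_1$.

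For the vanishing at infinity I would split the argument according to the hypothesis. In the first case, $\int_{|\rho| \ge 1} d\rho/A(\rho) < \infty$, I start from the Cauchy--Schwarz estimate
\[
|f(r)-f(s)|^2 \;\le\; \left(\int_s^r \frac{d\rho}{A(\rho)}\right)\left(\int_s^r |f'(\rho)|^2 A(\rho)\,d\rho\right),\qquad 1 \le s < r,
\]
whose right-hand side tends to zero as $s,r \to \infty$. This forces the radial profile to be Cauchy at infinity and hence to converge to some limit $L \in \mathbb{C}$. To rule out $L \neq 0$, suppose otherwise; then $|f(\rho)|^2 \ge |L|^2/2$ for $\rho$ large, so $\int^\infty A(\rho)\,d\rho \le (2/|L|^2)\int^\infty |f|^2 A\,d\rho < \infty$, and a second Cauchy--Schwarz application to the trivial identity $\int_{r_0}^R d\rho = \int_{r_0}^R A^{-1/2} \cdot A^{1/2}\,d\rho$ would then bound $R - r_0$ uniformly in $R$, contradicting $R \to \infty$.

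In the second case, $A(r) \to \infty$ and $|A'/A|$ bounded on $r \ge 1$, I would set $g(r) = |f(r)|^2 A(r)$ and use
\[
g'(r) = 2\operatorname{Re}\bigl(f'(r)\,\overline{f(r)}\bigr)A(r) + |f(r)|^2 A'(r).
\]
The first summand is absolutely integrable on $[1,\infty)$ by Cauchy--Schwarz against $\|f\|_{H^1_r}^2$, and the second is pointwise bounded by $\sup|A'/A| \cdot |f|^2 A \in L^1([1,\infty),dr)$. Hence $g(r)$ has a finite limit $\ell$ as $r \to \infty$; if $\ell > 0$ then $g \ge \ell/2$ eventually, contradicting $\int^\infty g\,dr < \infty$, so $\ell = 0$ and, since $A(r) \to \infty$, we conclude $|f(r)|^2 \to 0$.

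The main obstacle is precisely the passage from the existence of a limit (of the radial profile in the first case, or of $|f|^2 A$ in the second) to the conclusion that this limit is zero. Each of the two hypotheses on $A$ is tailored to supply the extra input needed: the integrability of $1/A$ provides a Cauchy--Schwarz contradiction against the $L^2$-control with weight $A$, while the $|A'/A|$ bound together with $A(r) \to \infty$ turns $|f|^2 A$ into a one-dimensional $L^1$ density with controllable derivative, enabling the same sort of squeeze. Modulo these two case-specific tricks, the rest of the argument is routine one-variable analysis.
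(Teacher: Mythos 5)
Your proof is correct. Note that the paper itself offers no proof of this lemma --- it is quoted verbatim from Lemma 2.1.1 of [MT] as an alternative criterion --- so there is nothing internal to compare against; but your argument is precisely the standard Strauss-type radial lemma proof one would expect to find in that reference. Both halves check out: in the first case, the Cauchy--Schwarz bound $|f(r)-f(s)|^2 \le \bigl(\int_s^r A^{-1}\bigr)\bigl(\int_s^r |f'|^2 A\bigr)$ gives a limit $L$ at infinity, and your second Cauchy--Schwarz application ($R - r_0 \le (\int A^{-1})^{1/2}(\int A)^{1/2}$) cleanly rules out $L \neq 0$; in the second case, $g = |f|^2A$ has $g' \in L^1$ (the cross term by Cauchy--Schwarz against the weighted $H^1$ norm, the $|f|^2A'$ term by the $|A'/A|$ bound), so $g$ converges, and $g \in L^1(dr)$ forces the limit to be zero, whence $|f|^2 = g/A \to 0$ since $A \to \infty$. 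The only implicit regularity point --- that the radial profile is absolutely continuous so the fundamental theorem of calculus applies to $f$ and to $g$ --- is covered by your continuity discussion, since on compact subintervals of $[1,\infty)$ the weight $A$ is bounded above and below by positive constants and the weighted condition reduces to ordinary one-dimensional $H^1_{\mathrm{loc}}$.
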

Let us also prove the following
\begin{lemma}\label{noncpt}
Consider a non-compact manifold $M$ as described in the statement of Lemma \ref{Cond}. Given $m, \lambda \in \RR$, we assume %(\ref{newb}) and (\ref{spec2}). Now, if (\ref{spec3}) is satisfied, In addition to (\ref{spec2}) and (\ref{spec3}), assume 
the following bounds on $b$:
\beq\label{newb}
b^2 + 2|\lambda| b < 1,\text{  and also  } 2|\lambda|b < m^2 - \lambda^2 \text{    if   } m^2 - \lambda^2 > 0.
\eeq
Now, under (\ref{spec2}), and if (\ref{spec3}) holds, then we have,
\[
F_{m, \lambda, X}(u) \cong \V u\V^2_{H^1}.
\]
\end{lemma}
\begin{proof}
We have, $\text{Spec}(-\Delta + X^2 + 2i\lambda X) \subset [\beta(\lambda), \infty)$ and $m^2 - \lambda^2 > -\beta(\lambda)$. Assume first that $m^2 - \lambda^2 > 0$. We have,
\[
F_{m, \lambda, X}(u) \leq (-\Delta u, u) + |(Xu, Xu)| + 2|\lambda|| (Xu, u)| + ((m^2 - \lambda^2)u, u).
\] 
Using $\langle X, X\rangle \leq b^2$, on calculation this gives, $F_{m, \lambda, X}(u) \lesssim \V u\V^2_{H^1}$. Also,
\[
(-\Delta u, u) - |(Xu, Xu)| - 2|\lambda||(Xu, u)| + ((m^2 - \lambda^2)u, u)  \leq F_{m, \lambda, X}(u). \]
We want to show that 
\beq\label{HEAD}
C\V u\V^2_{H^1} \leq (-\Delta u, u) - |(Xu, Xu)| - 2|\lambda||(Xu, u)| + ((m^2 - \lambda^2)u, u),
\eeq
where $C > 0$ is independent of $u$. This will hold if and only if we can find a constant $C$ such that
\[
|(Xu, Xu)| + 2|\lambda||(Xu, u)| \leq (1 - C)(-\Delta u, u) + (m^2 - \lambda^2 - C)\V u\V^2_{L^2}.
\]
On calculation, using $\langle X, X\rangle \leq b^2$, we get
\[
|(Xu, Xu)| + 2|\lambda||(Xu, u)| \leq (b^2 + 2|\lambda|b)(-\Delta u, u) + 2|\lambda| b\V u\V^2_{L^2}.
\]
which finally proves (\ref{HEAD}).\newline
Now let us consider the case $0 \geq m^2 - \lambda^2 > -\beta(\lambda)$. As before, letting $-L_{2\lambda} = -\Delta + X^2 + 2i\lambda X$, we have
\[
F_{m, \lambda, X}(u) \leq (-L_{2\lambda} u, u) + (u, u) \lesssim \V u\V^2_{H^1} \text{  (using } \langle X, X\rangle \leq b^2).
\]
Also, the calculation for $\V u\V^2_{H^1} \lesssim (-L_{2\lambda} u, u) + (u, u)$ is similar to the proof of (\ref{HEAD}). So, we are done if we can prove that 
\beq\label{ACHE}
(-L_{2\lambda} u, u) + (u, u) \lesssim F_{m, \lambda, X}(u).
\eeq
We see that $(-L_{2\lambda} u, u) \geq \beta(\lambda)(u, u)$. When $\alpha > -\beta(\lambda)$, we have
\begin{align*}
(-L_{2\lambda} u, u) + \alpha (u, u) & \geq C(-L_{2\lambda} u, u) \geq \frac{C}{2}(-L_{2\lambda} u, u) + \frac{\beta(\lambda)}{2}C(u, u)\\
& \gtrsim (-L_{2\lambda} u, u ) + (u, u),
 \end{align*}
 where $C = 1 + \frac{\alpha}{\beta(\lambda)}$.
\end{proof}
Now, we have our first main theorem of this paper:
\begin{theorem}{\bf (Main theorem I)}\label{100}
Consider a non-compact manifold $M$ as described in the statement of Lemma \ref{Cond}. Given $m, \lambda \in \RR$, we assume (\ref{newb}) and (\ref{spec2}). Now, if (\ref{spec3}) is satisfied, then we can minimize $F_{m, \lambda, X}(u)$ in the class of functions $H^1_r(M)$ subject to (\ref{ic}). %the restrictions quoted in the statement of Lemma \ref{noncpt}, that is, 
%(\ref{spec2}), (\ref{spec3}), and (\ref{newb}). 
Here we keep $p$ in the range $(1, \frac{n + 2}{n - 2})$. %and $m$ satisfies $m^2 - \lambda^2 > -\beta(\lambda)$, as in (\ref{spec2}).  %A similar statement is possible for the nonlinear Schr{\"o}dinger with a simpler proof.
\end{theorem}
\begin{proof}
We already know, under our assumptions, 
\begin{eqnarray}\label{a'}
F_{m, \lambda, X}(u) \cong \V u\V^2_{H^1(M)}.
\end{eqnarray}
We also have, $H^1(M) \hookrightarrow L^q(U)$ compactly, $q \in [2, \frac{2n}{n -2})$, where $\overline{U}$ is compact in $M$. Also, by Lemma \ref{Cond}, 
\[
u \in H^1_r(M) \Rightarrow u \mbox{ vanishes at infinity}.
\]
So, 
\[
u \in H^1_r(M) \Rightarrow u \in  L^\infty (M \setminus U).
\]
Also, $u \in L^2(M)$. This means, by interpolation, 
\[
u \in L^q(M \setminus U) \mbox{ for all } q \in [2, \infty].
\]
We also have,
\begin{align}\label{b'}
\int_{M\setminus U} |u|^q dM & \leq \V u\V^{q - 2}_{L^\infty (M \setminus U)}\int_{M\setminus U} |u|^2dM\nonumber \\
& \leq \V u\V^{q - 2}_{L^\infty (M \setminus U)}\V u\V^2_{H^1(M)},
\end{align}
and this gives,
\begin{align*}
u \in H^1_r(M) & \Rightarrow u \in L^q(M) \mbox{ } \forall \mbox{ } q \in [2, \frac{2n}{n - 2})\\
& \Rightarrow u \in L^{p + 1} (M) \mbox{ } \forall \mbox{ } p \in (1, \frac{n + 2}{n - 2}) \text{    } (p = 1 \text{  is not in our range}).
\end{align*}
As usual, let 
\[
I_\beta = \mbox{inf}\{F_{m, \lambda, X}(u) : u \in H^1_r(M), \text{      subject to   } (\ref{ic})\}.
\]
Clearly, $I_\beta > 0$, because of (\ref{a'}), (\ref{b'}) and the constraint (\ref{ic}).
Now, take a sequence $u_\nu \in H^1_r(M)$ such that $\V u_\nu\V^{p + 1}_{L^{p + 1}} = \beta$, and $F_{m, \lambda, X} (u_\nu) \leq I_\beta + 1/\nu$.\newline
Passing to a subsequence if necessary and without changing the notation, $u_\nu \rightarrow u \in H^1_r(M)$ weakly, which implies, by compact Sobolev embedding,
\begin{eqnarray}\label{shut}
u_\nu \longrightarrow u \mbox{ in } L^{p + 1}(U)\text{-norm} \mbox{ for all relatively compact } U.
\end{eqnarray}
Also, using (\ref{shut}) with very large $U$'s and the fact that $u_\nu, u$ vanish at infinity, we have from (\ref{b'}),
\begin{eqnarray}\label{up1}
\V u_\nu - u\V_{L^{p + 1}(M\setminus U)} \longrightarrow 0, \end{eqnarray}
meaning finally that 
\[
\V u\V^{p + 1}_{L^{p + 1}} = \beta.
\]
Also, we have to prove that $F_{m, \lambda, X}(u) = I_\beta$. This comes from the fact that
\[
F_{m, \lambda, X}(u) \leq \liminf F_{m, \lambda, X}(u_\nu).
\]
So finally a constrained $F_{m, \lambda, X}$ minimizer is obtained. %The fact that a constrained minimizer will give a solution is realized parallel to the arguments in Lemma \ref{1.2}. 
\end{proof}
\subsection{Constrained energy minimizers}\label{MTIIS}
We now write about constrained energy minimizers in a non-compact setting. To be precise, we assume that our non-compact manifold $M$ is weakly homogeneous. To recall what this means, we make the following
\begin{definition}\label{WHS}
A manifold $M$ is said to be weakly homogeneous if there is a group $G$ of isometries of $M$ and a number $D > 0$ such that for every $x, y \in M$, there exists a $g \in G$ such that dist$(x, g(y)) \leq D$.
\end{definition} 
%on the types of spaces $M = [0, \infty) \times N$, as mentioned at the beginning of subsection 1.8. To be exact, 
On such spaces, we are trying to minimize the energy 
\[
\mathcal{E}_{\lambda, X}(u) = \frac{1}{2}(-\Delta u + X^2u + 2i\lambda Xu, u) - \frac{1}{p + 1}\int_M|u|^{p + 1}dM
\]
subject to $\V u\V^2_{L^2} = \beta$ (constant) and (\ref{spec2}), the minimization being done over $H^1(M)$, and $p \in (1, 1 + 4/n)$.\newline
Let
\[
I_\beta = \mbox{inf}\{\mathcal{E}_{\lambda, X}(u) : u \in H^1(M), \V u\V^2_{L^2} = \beta\}.
\]
We will make the following technical assumption:
\begin{eqnarray}\label{ass}
I_\beta < -\frac{(m^2 - \lambda^2)}{2}\beta,
\end{eqnarray}
where $m$ is selected such that $m^2 - \lambda^2 > \text{max }\{-\beta(\lambda), 0\}$,  with $\beta(\lambda)$ defined as in (\ref{spec2}). We also assume that (\ref{newb}) is satisfied.\newline
With that in place, we state the second main theorem of this paper:
\begin{theorem}{\bf (Main Theorem II)}\label{1000}
If $M$ is a non-compact weakly homogeneous manifold, %satisfying the conditions quoted in Lemma (\ref{Cond}), 
under the technical assumption (\ref{ass}), we can minimize $\Cal{E}_{\lambda, X}(u)$ in the class of functions $H^1(M)$ subject to $\Vert u\Vert_{L^2} = \beta$ and (\ref{spec2}). Here we want $p$ in the range $(1, 1 + \frac{4}{n})$.
\end{theorem}
Arguing with the Gagliardo-Nirenberg inequality as in Proposition 1.3, we can reach equation (\ref{broken1}), which lets us conclude that $I_\beta > -\infty$, and if $u_\nu$ is a sequence in $H^1(M)$ satisfying $\mathcal{E}_{\lambda, X}(u_\nu) < I_\beta + \frac{1}{\nu}$, then (a subsequence) $u_\nu$ is weakly convergent to $u \in H^1(M)$. \newline
%and labeling as before the weak limit of $u_\nu$ in $H^1_r(M)$ as $u$, 
Now, we can see that establishing $u$ as the constrained energy minimizer amounts to establishing two things:\newline
\begin{enumerate}
\item[$\bullet$] $u_\nu \longrightarrow u$ in $L^2$-norm, so that $\V u\V^2_{L^2} = \beta,$
%\item[$\bullet$] $u_\nu \longrightarrow u$ in $L^{p + 1}$-norm
\item[$\bullet$] $\mathcal{E}_{\lambda, X}(u) = I_\beta$.
%\liminf \mathcal{E}_{\lambda, X}(u_\nu)
\end{enumerate}
Now, in view of (\ref{EF}), the second bullet point will be established %by a variant of the method used to establish (\ref{shut}) and (\ref{up1}) as outlined in the analysis for the $F_{m, \lambda, X}$ minimizers earlier. We will say more on the third bullet point later. Let us talk about the first bullet point now.\newline
if we can prove that 
\beq\label{REQP}
\V u_\nu\V_{L^{p + 1}} \to \V u\V_{L^{p + 1}},
\eeq
and 
\beq \label{REQPI}
F_{m, \lambda, X}(u) \leq \liminf F_{m, \lambda, X}(u_\nu).
\eeq
Since $\V u_\nu\V_{H^1}$ is uniformly bounded, (\ref{REQP}) will be established via the Gagliardo-Nirenberg inequality (applied to $u - u_\nu$), in conjunction with the first bullet point above. Also, (\ref{REQPI}) is a consequence of weak convergence, spectral assumption (\ref{spec2}) and $m^2 - \lambda^2 > -\beta(\lambda)$.\newline
So now, our entire task hinges on proving the first bullet point, namely
\beq\label{VVI}
u_\nu \to u \text{   in  } L^2\text{-norm}.
\eeq
To accomplish this, we use the techniques of concentration-compactness, as laid out in ~\cite{L}. Below we give a formal statement of this. The statement was originally made in the setting of the Euclidean space, but as noted in ~\cite{CMMT}, the concentration-compactness principle and most of the subsidiary results generalize to manifolds of bounded geometry with essentially no changes at all. We will state the reformulated version as appears in ~\cite{CMMT}.
\begin{proposition}
Let $M$ be a Riemannian manifold with $C^\infty$ bounded geometry. Fix $\beta \in (0, \infty)$. Let $\{u_\nu\} \in L^{p + 1}(M)$ be a sequence satisfying $\int_M|u_\nu|^{p + 1}dM = \beta$. Then, after extracting a subsequence, one of the following three cases holds: \newline
(i) Vanishing: If $B_R(y) = \{x \in M : d(x, y) \leq R\}$ is the closed $R$-ball around $y$, then for all $R \in (0, \infty)$, 
\[
\lim_{\nu \to \infty}\sup_{y \in M}\int_{B_R(y)}|u_\nu|^{p + 1}dM = 0.
\]
(ii) Concentration: There exists a sequence of points $\{y_\nu\} \subset M$ with the property that for each $\varepsilon > 0$, there exists $R(\varepsilon) < \infty$ such that 
\[
\int_{B_{R(\varepsilon)}(y_\nu)}|u_\nu|^{p + 1}dM > \beta - \varepsilon.
\]
(iii) Splitting: There exists $\alpha \in (0, \beta)$ with the following properties: For each $\varepsilon > 0$, there exists $\nu_0 \geq 1$ and sets $E^{\#}_\nu, E^b_\nu \subset M$ such that 
\begin{eqnarray}\label{shutp}
d(E^{\#}_\nu, E^b_\nu) \rightarrow \infty \mbox{   as   } \nu \rightarrow \infty
\end{eqnarray}
and 
\begin{eqnarray}\label{shutq}
\bigg|\int_{E^{\#}_\nu}|u_\nu|^{p + 1}dM - \alpha\bigg| < \varepsilon, \bigg|\int_{E^{b}_\nu}|u_\nu|^{p + 1}dM - (\beta - \alpha)\bigg| < \varepsilon, \nu > \nu_0.
\end{eqnarray}

\end{proposition}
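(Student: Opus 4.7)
The plan is to mimic Lions' original concentration-compactness argument, which carries over to bounded geometry with only cosmetic changes. I would begin by defining the nondecreasing Lévy concentration function
$$Q_\nu(R) = \sup_{y \in M} \int_{B_R(y)} |u_\nu|^{p+1}\,dV.$$
Each $Q_\nu$ is bounded above by $\beta$, so by Helly's selection theorem applied along a diagonal in a countable dense subset of $[0,\infty)$, I may pass to a subsequence along which $Q_\nu(R) \to Q(R)$ at every continuity point of a nondecreasing limit $Q\colon [0,\infty)\to [0,\beta]$. Setting $\alpha = \lim_{R\to\infty} Q(R)$, the trichotomy of the proposition corresponds precisely to $\alpha = 0$, $\alpha = \beta$, and $0 < \alpha < \beta$.

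The vanishing case $\alpha = 0$ is immediate from $Q(R)\equiv 0$. For concentration, i.e.\ $\alpha = \beta$, I pick for each $\varepsilon_k = 2^{-k}$ a radius $R_k$ with $Q(R_k) > \beta - \varepsilon_k/4$, and then extract points $y_\nu^{(k)} \in M$ with
$$\int_{B_{R_k}(y_\nu^{(k)})}|u_\nu|^{p+1}\,dV > \beta - \varepsilon_k/2$$
for all sufficiently large $\nu$. Any two balls each carrying more than $\beta/2$ of the total mass must overlap, so $d(y_\nu^{(k)},y_\nu^{(k')}) \leq R_k + R_{k'}$ for large $\nu$, which lets me consolidate the $y_\nu^{(k)}$ into a single sequence $y_\nu$ by a standard diagonal procedure; the function $R(\varepsilon)$ is then built from the $R_k$.

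The splitting case is where the actual work lies. Given $0 < \alpha < \beta$ and $\varepsilon > 0$, I fix $R_0$ with $Q(R_0) > \alpha - \varepsilon/4$ and, using $Q_\nu(R_0) \to Q(R_0)$, select $y_\nu \in M$ such that $\int_{B_{R_0}(y_\nu)}|u_\nu|^{p+1}\,dV$ lies within $\varepsilon/2$ of $\alpha$ for $\nu$ large. A Cantor-type diagonal extraction then produces radii $R_\nu \to \infty$ with $R_\nu > 2R_0$ and $\int_{B_{R_\nu}(y_\nu)}|u_\nu|^{p+1}\,dV < \alpha + \varepsilon/2$. Setting $E_\nu^{\#} = B_{R_0}(y_\nu)$ and $E_\nu^b = M\setminus B_{R_\nu}(y_\nu)$ makes $d(E_\nu^{\#},E_\nu^b) \geq R_\nu - R_0 \to \infty$, verifying (\ref{shutp}), while (\ref{shutq}) follows by subtracting the mass on the annulus $B_{R_\nu}(y_\nu)\setminus B_{R_0}(y_\nu)$, which has been arranged to be less than $\varepsilon$.

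The main obstacle is calibrating the growth of $R_\nu$ in the splitting step: slow enough that the annular mass stays below $\varepsilon$, yet fast enough to force $R_\nu - R_0 \to \infty$, and this is what the Cantor-type extraction accomplishes. The passage from Euclidean space to a manifold of bounded geometry enters only through the need for uniform Vitali-type covering lemmas and volume comparisons when locating the points realizing $Q_\nu(R_0)$; bounded geometry supplies both of these ingredients without change, which is why the entire argument transfers verbatim to the present Riemannian setting.
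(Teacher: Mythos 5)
Your argument is correct and is precisely the canonical Lions proof — Lévy concentration function, Helly selection, trichotomy on $\alpha=\lim_{R\to\infty}Q(R)$, and the two-radius construction for dichotomy — which is exactly the source the paper defers to here: this proposition is quoted from [L] (see also [CMMT], Appendix A) and no proof is given in the paper itself. The only cosmetic point worth noting is that in case (ii) your construction yields the lower bound only for $\nu$ large, whereas the statement asks for all $\nu$; this is repaired by enlarging $R(\varepsilon)$ to absorb the finitely many initial indices, using that $\int_{B_R(y_\nu)}|u_\nu|^{p+1}\,dV\to\beta$ as $R\to\infty$ for each fixed $\nu$.
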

For a statement of the above fact in the even more general setting of measure metric spaces, see Appendix A.1 of ~\cite{CMMT}. A couple of lines about the heuristics of the concentration-compactness principle: when we have a sequence of elements in a Banach space with fixed norm, or, in other words, lying on a sphere in the Banach space, we cannot necessarily pick a norm convergent subsequence unless the Banach space itself is finite dimensional. But, we can give an exhaustive list of the possible behaviors of subsequences, at least in the context of the $L^p$ spaces. That is what the concentration-compactness principle gives. In our case, the only handle we have on the sequence $u_\nu$ is that all of them have the same $L^2$-norm. This should make the application of the concentration-compactness argument seem natural. In applications such as ours, the usual line of attack is to rule out vanishing and splitting phenomena, so we are left with concentration phenomenon as the only possibility. From there, we will show how to go to compactness, i.e., convergence of the subsequence, $\V u_\nu - u\V_{L^2} \rightarrow 0$, which has been the goal of the first bullet point. \newline
{\bf Ruling out vanishing and splitting}\newline
To rule out vanishing, one has to make the technical assumption mentioned before:
\begin{eqnarray}\label{ass1}
I_\beta < -\frac{(m^2 - \lambda^2)}{2}\beta,
\end{eqnarray}
where $m$ is selected such that $m^2 - \lambda^2 > -\beta(\lambda)$,  with $\beta(\lambda)$ defined as in (\ref{spec2}) and also $m^2 - \lambda^2 > 0$.\newline
It is not clear that we can always have (\ref{ass}) regardless of the manifold type. Some discussion about the assumption $I_\beta < 0$ is found in (3.0.10) and (3.0.11) of ~\cite{CMMT}. %We will find more to say on this later.
\newline 
Step I: Ruling out vanishing.\newline
Assume vanishing occurs, that is, $\forall\text{   } R \in (0, \infty)$,
\[
\lim_{\nu \to \infty}\sup_{y \in M}\int_{B_R(y)}|u_\nu|^{2}dM = 0 .
\]
We already know that $u_\nu$'s satisfy $\mathcal{E}_{\lambda, X}(u_\nu) < I_\beta + 1/\nu$ and that, $\{u_\nu\}$ is bounded in $H^1(M)$. \newline
Then, we have, by Lemma 2.1.2 of ~\cite{CMMT}
\[
2 < r < \frac{2n}{n - 2} \Longrightarrow \V u_\nu\V_{L^r(M)} \rightarrow 0.
\] 
%Then, by (\ref{shut}) and (\ref{up1}), we can see that $u = 0$, giving 

%\[
%||u_\nu||_{L^{p + 1}} \longrightarrow 0
%\]
%As noted in ~\cite{CMMT}, this is a special case of Lemma I.1 on page 231 of ~\cite{L1}. In ~\cite{L1}, this was established for $M = \mathbb{R}^n$, but the only two geometrical properties used in the proof there are the existence of Sobolev embeddings on balls of radius $R > 0$ and the fact that there exists $m > 0$ such that $\mathbb{R}^n$ has a covering by balls of radius $R$ in such a way that each point is contained in at most $m$ balls. These two properties hold on any Riemannian manifold with bounded geometry.\newline
That means, 
\[
\V u\V^2_{H^1} \cong F_{m, \lambda, X}(u) = 2\mathcal{E}_{\lambda, X}(u) + \frac{2}{p + 1}\int_M|u|^{p + 1}dM + (m^2 - \lambda^2)\beta
\]
implies in conjunction with (\ref{ass}) that
\[
\V u\V^2_{H^1} \leq \liminf \V u_\nu\V^2_{H^1} \leq \frac{2}{C^*}I_\beta + \frac{1}{C^*}(m^2 - \lambda^2)\beta < 0,
\]
which gives a contradiction. Here $C^*$ is a constant such that $C^*\V f\V^2_{H^1} \leq F_{m, \lambda, X}(f)$ for all $f \in H^1(M)$.\newline
Step II: Ruling out splitting.\newline
To rule out the splitting phenomenon, we first need a technical lemma, which is a special case of Propositions 3.1.2 and 3.1.3 of ~\cite{CMMT}.
\begin{lemma}\label{l1}
(i) If $\beta > 0, I_\beta < -\frac{m^2 - \lambda^2}{2}\beta, \sigma > 1$, then 
\begin{eqnarray}
I_{\sigma\beta} < \sigma I_\beta.
\end{eqnarray}
(ii) If $0 < \eta < \beta$ and $I_\beta < - \frac{m^2 - \lambda^2}{2}\beta$, we have 
\beq
I_\beta < I_{\beta - \eta} + I_{\eta}.
\eeq
\end{lemma}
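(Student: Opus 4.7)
The plan is to treat (i) by a direct scaling $u \mapsto \sigma^{1/2} u$ that exploits the strict superlinearity $(p+1)/2 > 1$ of the nonlinear term, and to deduce (ii) from (i) by a case split on whether the hypothesis of (i) also holds at the intermediate mass $\eta$ (and similarly at $\beta-\eta$).

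For (i), the first task is to extract a uniform positive lower bound on the $L^{p+1}$ mass along a minimising sequence. Since the standing positivity assumption gives $F_{m,\lambda,X}(u) \geq 0$, one has the pointwise bound
\begin{equation*}
\mathcal{E}_{\lambda,X}(u) \;\geq\; -\frac{m^2-\lambda^2}{2}\|u\|^2_{L^2} \;-\; \frac{1}{p+1}\int_M|u|^{p+1}\,dM,
\end{equation*}
so that for a minimising sequence $u_\nu \in H^1_r(M)$ with $\|u_\nu\|_{L^2}^2 = \beta$, the strict inequality $I_\beta < -\tfrac{1}{2}(m^2-\lambda^2)\beta$ forces $\tfrac{1}{p+1}\int_M|u_\nu|^{p+1}\,dM \geq c > 0$ for all sufficiently large $\nu$. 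Setting $v_\nu = \sigma^{1/2}u_\nu$, which is still radial and has $\|v_\nu\|_{L^2}^2 = \sigma\beta$, a direct computation gives the scaling identity
\begin{equation*}
\mathcal{E}_{\lambda,X}(v_\nu) \;=\; \sigma\,\mathcal{E}_{\lambda,X}(u_\nu) \;-\; \bigl(\sigma^{(p+1)/2}-\sigma\bigr)\frac{1}{p+1}\int_M|u_\nu|^{p+1}\,dM.
\end{equation*}
Since $\sigma>1$ and $(p+1)/2>1$ yield $\sigma^{(p+1)/2}>\sigma$, combining with the lower bound $c$ and letting $\nu \to \infty$ gives $I_{\sigma\beta} \leq \sigma I_\beta - (\sigma^{(p+1)/2}-\sigma)c/(p+1) < \sigma I_\beta$.

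For (ii), I will show that under the hypothesis $I_\beta < -\tfrac{1}{2}(m^2-\lambda^2)\beta$, the strict inequality
\begin{equation*}
I_{\eta'} \;>\; \frac{\eta'}{\beta}\,I_\beta
\end{equation*}
holds for every $\eta' \in (0,\beta)$. Applying this with $\eta' = \eta$ and $\eta' = \beta-\eta$ and summing then produces $I_\eta + I_{\beta-\eta} > I_\beta$. To verify the claim, fix $\eta'$ and split into two cases. If $I_{\eta'} < -\tfrac{1}{2}(m^2-\lambda^2)\eta'$, then part (i) applied with base mass $\eta'$ and scaling factor $\sigma = \beta/\eta' > 1$ gives $I_\beta = I_{\sigma\eta'} < \sigma I_{\eta'}$, which is exactly the desired inequality. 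Otherwise $I_{\eta'} \geq -\tfrac{1}{2}(m^2-\lambda^2)\eta'$, and multiplying the hypothesis on $\beta$ by $\eta'/\beta > 0$ yields $\tfrac{\eta'}{\beta}I_\beta < -\tfrac{1}{2}(m^2-\lambda^2)\eta' \leq I_{\eta'}$, again strictly.

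The only delicate point is the extraction of the uniform positive lower bound on the $L^{p+1}$ mass in (i); this is precisely where the strict hypothesis $I_\beta < -\tfrac{1}{2}(m^2-\lambda^2)\beta$ is used, and the case dichotomy in (ii) is arranged exactly to handle the possibility that the corresponding hypothesis fails at the intermediate mass.
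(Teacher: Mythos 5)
Your argument is correct and follows exactly the route the paper indicates for this lemma: part (i) via the scaling $w_\nu = \sigma^{1/2}u_\nu$ applied to a minimising sequence (with the hypothesis $I_\beta < -\tfrac{1}{2}(m^2-\lambda^2)\beta$ and the positivity of $F_{m,\lambda,X}$ supplying the uniform lower bound on the $L^{p+1}$ mass), and part (ii) deduced from (i) by purely algebraic manipulation. The paper omits these details, citing the analogous Proposition 3.1.2 of [CMMT]; your write-up supplies them correctly.
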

%\begin{proof} %We will save space by skipping these proofs, which are hardly instructive anyway. 
%(i) follows along absolutely similar lines as Proposition 3.1.2 in [CMMT] by defining $w_\nu = \sigma^{1/2}u_\nu$ and comparing $\mathcal{E}_{\lambda, X}(w_\nu)$ with $\mathcal{E}_{\lambda, X}(u_\nu)$. (ii) follows from (i) along the lines of Proposition 3.1.3 of ~\cite{CMMT}. 
%\end{proof}
Finally, we work to rule out splitting phenomena. We have
\begin{proposition}
%Under the %setting of Lemma \ref{l1}, 
If $\{u_\nu\} \in H^1(M)$ is a $\Cal{E}_{\lambda, X}$-minimizing sequence with $\V u_\nu\V^2_{L^2} = $ constant, then splitting ((\ref{shutp}) and (\ref{shutq})) cannot occur. 
\end{proposition}
\begin{proof} 
Begin by choosing $\varepsilon > 0$ sufficiently small such that 
\begin{eqnarray}\label{q}
I_{\beta} < I_\alpha + I_{\beta - \alpha} - C_1\varepsilon,
\end{eqnarray}
where $C_1$ is a constant that will be chosen later.\newline
Suppose now that splitting happens. We have already argued that $\V u_\nu\V_{H^1}$ is uniformly bounded. Also, seeing that $\V u_\nu\V_{L^2} = $ constant and $\V u_\nu\V_{L^{p + 1}}$ is uniformly bounded by an application of the Gagliardo-Nirenberg inequality, it follows from (\ref{shutp}) and  (\ref{shutq}) that there exists $\nu_1$ such that when $\nu \geq \nu_1$, we have
\beq\label{MASTER}
\int_{S_\nu}|u_\nu|^2dM + \int_{S_\nu}|\nabla u_\nu|^2dM + \int_{S_\nu}|u_\nu|^{p + 1}dM < \varepsilon,
\eeq
where $S_\nu$ is a set of the form 
\[
S_\nu = \{x \in M : d_\nu < d(x, E^{\#}_\nu) \leq d_\nu + 2\} \subset M\setminus (E^{\#}_\nu \cup E^b_\nu)
\]
for some $d_\nu > 0$. Call 
\[
\tilde{E}_\nu (r) = \{x \in M : d(x, E^{\#}_\nu) \leq r\}.
\]
Now define functions $\chi^{\#}_\nu$ and $\chi^b_\nu$ by
$$
\chi^{\#}_\nu (x) = 
\begin{cases}
1, & \text{if } x \in \tilde{E}_\nu(d_\nu)\\
1 - d(x, \tilde{\textsl{}E}_\nu(d_\nu)), & \text{if } x \in \tilde{E}_\nu(d_\nu + 1)\\
0, & \text{if } x \notin \tilde{E}_\nu(d_\nu + 1)
\end{cases}
$$
and 
$$
\chi^b_\nu (x) = 
\begin{cases}
0, & \text{if } x \in \tilde{E}_\nu(d_\nu + 1)\\
d(x, \tilde{E}_\nu(d_\nu + 1)), & \text{if } x \in \tilde{E}_\nu(d_\nu + 2)\\
1, & \text{if } x \notin \tilde{E}_\nu(d_\nu + 2).
\end{cases}
$$
Observe that both $\chi^{\#}_\nu(x)$ and $\chi^b_\nu(x)$ are Lipschitz with Lipschitz constant 1 and the intersection of their supports has measure zero. Also set
\[
u^{\#}_\nu = \chi^{\#}_\nu u_\nu, u^b_\nu = \chi^b_\nu u_\nu.
\]
Just to motivate what we are doing, we want a control on the term $|\mathcal{E}_{\lambda, X}(u_\nu) - \mathcal{E}_{\lambda, X}(u^{\#}_\nu + u^b_\nu)|$ i.e., show that 
\beq\label{TOSHOW}
|\mathcal{E}_{\lambda, X}(u_\nu) - \mathcal{E}_{\lambda, X}(u^{\#}_\nu + u^b_\nu)| = |\mathcal{E}_{\lambda, X}(u_\nu) - [\mathcal{E}_{\lambda, X}(u^{\#}_\nu) + \mathcal{E}_{\lambda, X}(u^b_\nu)]| \lesssim \varepsilon,
\eeq
and get a contradiction from the fact that $|I_\beta - I_\alpha - I_{\beta - \alpha}| > C_1\varepsilon$ which comes from (\ref{q}).
Choosing $m$ such that $m^2 - \lambda^2 > -\beta(\lambda)$, with $\beta(\lambda)$ as in (\ref{spec2}), we know that 
\[
2\mathcal{E}_{\lambda, X}(u_\nu) = F_{m, \lambda, X}(u_\nu) - \frac{2}{p + 1}\V u_\nu\V^{p + 1}_{L^{p + 1}} - (m^2 - \lambda^2)\V u_\nu\V^2_{L^2},
\]
and hence we see by triangle inequality that controlling each of the terms 
\beq
\int_M\bigg(|u_\nu|^{p + 1} - (|u^{\#}_\nu|^{p + 1} + |u_\nu|^{p + 1})\bigg)dM,
\eeq
\begin{eqnarray}\label{dhush}
\int_M\bigg(\V u_\nu\V_{H^1} - (\V u^{\#}_\nu\V_{H^1} + \V u^b_\nu\V_{H^1})\bigg)dM,
\end{eqnarray}
\begin{eqnarray}\label{jhamela}
|F_{m, \lambda, X}(u_\nu) - (F_{m, \lambda, X}(u^{\#}_\nu) + F_{m, \lambda, X}(u^b_\nu))|,
\end{eqnarray}
would be sufficient. To that end, we first 
note that when $\nu \geq \nu_1$, 
\[
\V u^{\#}_\nu\V^2_{L^2} = \alpha_\nu, \mbox{    where    } |\alpha - \alpha_\nu| < 2\varepsilon
\]
and
\begin{eqnarray*}
\V u^b_\nu\V^2_{L^2} = \beta_\nu - \alpha_\nu, \mbox{    where    } |(\beta - \alpha) - (\beta_\nu - \alpha_\nu)| < 2\varepsilon.
\end{eqnarray*}
Now, we have
\begin{eqnarray*}
\int_M\bigg(|u_\nu|^{p + 1} - (|u^{\#}_\nu|^{p + 1} + |u_\nu|^{p + 1})\bigg)dM \leq \int_{S_\nu}|u_\nu|^{p + 1}dM < \varepsilon
\end{eqnarray*}
and 
\begin{eqnarray}\label{1another}
\int_M\bigg(|u_\nu|^2 - (|u^{\#}_\nu|^2 + |u_\nu|^2)\bigg)dM \leq \int_{S_\nu}|u_\nu|^2dM < \varepsilon.
\end{eqnarray}
Using $\nabla u^{\#}_\nu = \chi^{\#}_\nu\nabla u_\nu + (\nabla \chi^{\#}_\nu)u_\nu$, the corresponding identity for $\nabla u^b_\nu$ and the fact that both $\chi^{\#}_\nu(x)$ and $\chi^b_\nu(x)$ have Lipschitz constant 1, we see that
\begin{align}\label{2}
\int_M\bigg(|\nabla u_\nu|^2 - (|\nabla u^{\#}_\nu|^2 + |\nabla u_\nu|^2)\bigg)dM & \leq \int_{S_\nu}|u_\nu|^2dM + \int_{S_\nu}|\nabla u_\nu|^2dM \lesssim \varepsilon.
\end{align}
(\ref{1another}) and (\ref{2}) together give (\ref{dhush}).
Now we are left with (\ref{jhamela}).\newline
From the definition of $F_{m, \lambda, X}(u)$ and what has gone before, we see that it suffices to control 
\begin{eqnarray*}
|(X^2 u_\nu, u_\nu) - (X^2 u^{\#}_\nu, u^{\#}_\nu) - (X^2 u^b_\nu, u^b_\nu)|
\end{eqnarray*}
or, equivalently,
\begin{eqnarray*}
\bigg|\int_M (|Xu_\nu|^2 - |Xu^{\#}_\nu|^2 - |Xu^b_\nu|^2)dM\bigg|.
\end{eqnarray*}
and also
\begin{eqnarray*}
\bigg|(iXu_\nu, u_\nu) - (iX u^{\#}_\nu, u^{\#}_\nu) - (iX u^b_\nu, u^b_\nu)\bigg|.
\end{eqnarray*} 
Now, as before, $Xu^{\#}_\nu = \chi^{\#}_\nu Xu^{\#}_\nu + X(\chi^{\#}_\nu)u^{\#}_\nu$, so
\begin{align}\label{above}
\bigg|\int_M (|Xu_\nu|^2 - |Xu^{\#}_\nu|^2 - |Xu^b_\nu|^2)dM\bigg| & \leq \int_{S_\nu}|u_\nu|^2dM + \int_{S_\nu}|X u_\nu|^2dM\\
& \lesssim \int_{S_\nu}|u_\nu|^2dM + \int_{S_\nu}|\nabla u_\nu|^2dM \lesssim \varepsilon,
\end{align}
the last observation coming from the fact that $X$ is bounded, which means that $|Xu_\nu| = |X.\nabla u_\nu| \lesssim |\nabla u_\nu|$. Lastly, we can also control 
\begin{eqnarray*}
\bigg|(iXu_\nu, u_\nu) - (iX u^{\#}_\nu, u^{\#}_\nu) - (iX u^b_\nu, u^b_\nu)\bigg|
\end{eqnarray*}
by using the Cauchy-Schwarz inequality. This is because
\begin{align*}
|(iXu_\nu, u_\nu) - (iXu^{\#}_\nu, u^{\#}_\nu) - (iXu^{b}_\nu, u^{b}_\nu)|  = |(Xu_\nu, u_\nu) - (\chi^{\#}_\nu Xu^{\#}_\nu + X(\chi^{\#}_\nu)u^{\#}_\nu, u^{\#}_\nu)\\
- (\chi^{b}_\nu Xu^b_\nu + X(\chi^{b}_\nu)u_\nu^b, u^b_\nu)|\\
= |(Xu_\nu, u_\nu) - (Xu_\nu, \chi^{\#}_\nu u_\nu) - (Xu_\nu, \chi^b_\nu u_\nu) - (X(\chi_\nu^{\#})u_\nu, u_\nu^{\#}) + (X(\chi_\nu^{b})u_\nu, u_\nu^{b})|
\end{align*}
\begin{align*}
\leq |(Xu_\nu, u_\nu) - (Xu_\nu, \chi^{\#}_\nu u_\nu) - (Xu_\nu, \chi^b_\nu u_\nu)| + |(X(\chi_\nu^{\#})u_\nu, u_\nu^{\#})| + |(X(\chi_\nu^{b})u_\nu, u_\nu^{b})|\\
\leq |\int_{S_\nu} Xu_\nu\overline{u_\nu}| + |\int_{S_\nu} u_\nu\overline{u_\nu^{\#}}| + |\int_{S_\nu}u_\nu\overline{u_\nu^b}| \leq |\int_{S_\nu} Xu_\nu\overline{u_\nu}| + 2\int_{S_\nu}|u_\nu|^2.
\end{align*} 
Now,
\[
|\int_{S_\nu} Xu_\nu \overline{u_\nu}| \leq \V Xu_\nu\V_{L^2({S_\nu})}\V u_\nu\V_{L^2({S_\nu})} \leq \V X.\nabla u_\nu\V_{L^2({S_\nu})}\V u_\nu\V_{L^2({S_\nu})} \]\[\lesssim \V\nabla u_\nu\V_{L^2({S_\nu})}\V u_\nu\V_{L^2({S_\nu})} \leq \varepsilon
\]
the last step coming from (\ref{MASTER}).
That completes the proof.
\end{proof} 
Now that we have ruled out the alternatives, we can say that the minimizing sequence $u_\nu$ will concentrate. Recall that this means
\begin{corollary}\label{cor}
Under the setting of Lemma \ref{l1}, there is a sequence of points $y_\nu \in M$  such that for all $\varepsilon > 0$, there exists $R(\varepsilon) < \infty$ (independent of $\nu$) such that 
\begin{eqnarray}\label{quote}
\int_{M \setminus B_{R(\varepsilon)}(y_\nu)} |u_\nu|^2 dM < \varepsilon.
\end{eqnarray}
\end{corollary}
%However, unfortunately, to go from concentration to compactness, one needs certain homogeneous space like properties of $M$. [CMMT] calls spaces like these \emph{weakly homogeneous spaces}, defined as follows:
%\begin{definition}
%There exists a group $G$ of isometries of $M$ and a number $D > 0$ such that for every $x, y \in M$, there exists $g \in G$ such that $d(x, g(y)) \leq D$. 
%\end{definition}
This allows us to invoke the assumption of weak homogeneity at last. Using weak homogeneity, we can map the sequence $y_\nu$ into a compact region $K \subset M$ and we still call the translates of $u_\nu$ as $u_\nu$. Now, any subsequence which concentrates will have compact Sobolev embedding, i.e., we use the compact embedding $H^1(M) \hookrightarrow L^2(K)$. Then, weak $H^1$ convergence of $u_\nu$ allows us to find a subsequence, still called $u_\nu$, such that 
\beq\label{LMom}
\V u_\nu - u\V_{L^2(K)} \to 0.
\eeq
We see that (\ref{LMom}) holds for all bounded $K \subset M$. Hence, from (\ref{quote}), we see that $\V u\V^2_{L^2} = \beta$ and $u_\nu \to u$ in $L^2(M)$-norm.\newline
%Since we have concentration, outside $K$, $\V u_\nu\V^2_{L^2} < \varepsilon$. \newline
This is what we intended to prove.\newline

\section{\bf Small perturbations of $X$ and corresponding minimisers}\label{P}
Previously, we have established the existence of $F_{m, \lambda, X}$ minimizers on non-compact manifolds of type $M = N \times [0, \infty)$ which have bounded geometry and metrics of the type $g = dr^2 + \phi (r)g_N$, with smooth $\phi$. Similarly, we can establish the existence of constrained $F_{\lambda, X}$ minimizers.\newline
Now, we raise the following perturbation question: if we perturb the Killing field $X$ slightly, can we prove that the corresponding constrained minimizers also vary slightly? It certainly seems believable on a compact manifold, but the question is more involved on a non-compact setting. We study this question for the $F_{\lambda, X}$ minimizers in connection with the NLS equation. Roughly, we establish that on the class of manifolds considered in Lemma \ref{Cond}, slight perturbations in the Killing field will result in slight variations in the constrained $F_{\lambda, X}$-minimizers in the sense of $L^s$-norm, $s \in [2, \frac{2n}{n - 2})$. More formally, 
\begin{proposition}\label{SP}
Consider a non-compact manifold $M$ satisfying all the conditions as in Lemma \ref{Cond}. Start with a bounded Killing field $X$ on $M$ and assume that $X_n = X + \varepsilon_n X^{''}$ is a Killing field for all $n \in \mathbb{N}$, where $X^{''}$ is another bounded Killing field on $M$, and $\varepsilon_n \to 0$ is a decreasing sequence of positive real numbers. Let $u_n$ be constrained minimizers of $F_{\lambda, X_n}(u)$ subject to (\ref{ic}). Then, there exists a subsequence of $u_n$, still called $u_n$, such that 
\[
\V u_m - u_n\V_{L^s} \to 0, \text{   as  } m, n \to \infty, s \in [2, \frac{2n}{n - 2}).
\]
\end{proposition}
\begin{proof}
Let us suppose, $X^{''}$ is such that $\langle X^{''}, X^{''}\rangle \leq C$. \newline% (wanting that $X^{'}$ be a Killing field is definitely a restriction on the manifold; for the generic manifold, slight perturbations of a Killing field might not at all give another Killing field). We first want to see how much $F_{\lambda, X}(u)$ varies. 
Now
\begin{align*}
F_{\lambda, X^{'}}(u) & = (-\Delta u + \lambda u - iX^{'} u, u) \\
& = (-\Delta u - iXu - i\varepsilon X^{''} u + \lambda u, u)\\
& = F_{\lambda, X}(u) - (i\varepsilon X^{''}u, u) \longrightarrow F_{\lambda, X}(u) \mbox{  as  } \varepsilon \longrightarrow 0.
\end{align*}
Also,
\begin{align*}
|F_{\lambda, X^{'}}(u) - F_{\lambda, X}(u)| & = |(i\varepsilon X^{''}u, u)| \leq \V \varepsilon X^{''} u\V\V u\V\\
& = \V\varepsilon X^{''}.\nabla u\V\V u\V \leq |\varepsilon X^{''}|\V \nabla u\V \V u\V \\
& \leq C\varepsilon \V u\V^2_{H^1},
\end{align*}
the last step using the bound on $X^{''}$ and the Cauchy-Schwarz inequality.\newline
Now, consider a decreasing sequence $\varepsilon_n \longrightarrow 0$ and consider the perturbations $X_n = X + \varepsilon_n X^{''}$ of $X$. Suppose for each $n$, $u_n \in H^1_r(M)$ is a minimizer of $F_{\lambda, X_n}$ subject to $\V u\V^{p + 1}_{L^{p + 1}} = \beta$ (constant). We will start by arguing that $u_n$ has a convergent subsequence in the $L^r$-norm, $r \in [2, \frac{2n}{n - 2})$. \newline
First, we argue that $\V u_n\V^2_{H^1}$ is uniformly bounded. We need to see that 
\begin{align} \label{b}
I_{\beta, n} \mbox{   uniformly bounded   } & \Longleftrightarrow F_{\lambda, X_n}(u_n) \mbox{   uniformly bounded   }\\ & \Longrightarrow \V u_n\V^2_{H^1} \mbox{   uniformly bounded},
 \end{align}
where $I_{\beta, n} = \mbox{inf}\{F_{\lambda, X_n} (u) : u \in H^1_r(M), \V u\V^{p + 1}_{L^{p + 1}} = \beta\}$.\newline
We observe that $F_{\lambda, X_n}(u) \geq 0$ for all $n, u$. So, fixing an integer $k$, we have
\begin{align}
|I_{\beta, k + q} - I_{\beta, k}| & \leq |F_{\lambda, X_{k + q}}(u_k) - F_{\lambda, X_{k}}(u_k)|\\
& \lesssim \varepsilon_k \V u_k\V^2_{H^1}
\end{align}
for all positive integers $q$. That means, 
\begin{align}
I_{\beta, k + q} & \leq I_{\beta, k} + C\varepsilon_k \V u_k\V^2_{H^1},
\end{align}
which gives that $I_{\beta, n}$ is uniformly bounded, which means that $F_{\lambda, X_n}(u_n)$ is uniformly bounded. \newline
Now we know, $\V u\V^2_{H^1(M)} \leq C^{(n)}F_{\lambda, X_n}(u)$ for all $u \in H^1(M)$. That gives, for $m > k$, $k$ being fixed,
\begin{align*}
\V u\V^2_{H^1(M)} & \leq C^{(k)}((-\Delta - iX_k + \lambda)u, u)\\
& = C^{(k)}((-\Delta - iX_m + \lambda)u, u) + C^{(k)}((iX_m - iX_k)u, u)\\
& \leq C^{(k)}F_{\lambda, X_m}(u) + C^{(k)}|(\varepsilon_m - \varepsilon_k)(X^{''}u, u)|\\
& \leq C^{(k)}F_{\lambda, X_m}(u) + CC^{(k)}\varepsilon_k\V u\V^2_{H^1(M)},
\end{align*}
which finally implies
\beq\label{lageraho}
\V u\V^2_{H^1} \leq \frac{C^{(k)}F_{\lambda, X_m}(u)}{1 - CC^{(k)}\varepsilon_k}.
\eeq
Let us justify (\ref{lageraho}). We claim that there exist some positive integer $l$ such that $CC^{(k)}\varepsilon_k < 1$ for all $k \geq l$. If not, there will exist a subsequence, still called $C^{(k)}$ with mild abuse of notation, such that $CC^{(k)}\varepsilon_k \geq 1$, which means that $C^{(k)} \to \infty$ as $k \to \infty$. But this contradicts the fact that $C^{(k)} \to C^*$, where $C^*$ is a constant such that 
\[
\V u\V^2_{H^1} \leq C^*F_{\lambda, X}(u).\]
Now, (\ref{lageraho}) means, in particular
\beq
\V u_m\V^2_{H^1} \leq \frac{C^{(k)}}{1 - CC^{(k)}\varepsilon_k}F_{\lambda, X_m}(u_m),
\eeq
which means that finally we have, $\{\V u_n\V^2_{H^1}\}$ is uniformly bounded.\newline
Since we have $\V u_n\V^2_{H^1} \leq K$ uniformly, we can say that $u_n$ converges in the weak topology of $H^1(M)$. Since we are working on manifolds of bounded geometry, 
we also have, when $2 < s < \infty$, and relatively compact $U$,
\begin{align}\label{c}
\int_{M\setminus U} |u_n|^s dM & \leq \V u_n\V^{s - 2}_{L^\infty (M \setminus U)}\int_{M\setminus U} |u_n|^2dM \\
& \leq \V u_n\V^{s - 2}_{L^\infty (M \setminus U)}\V u_n\V^2_{H^1(M)}.
\end{align}
Also, compact Sobolev embedding gives us
\[
H^1(M) \hookrightarrow L^s(\Omega) \text{   compactly,  } \forall s \in [2, \frac{2n}{n - 2})
\]
given $\Omega \subset M$ relatively compact. This, together with (\ref{c}), gives\begin{align*}
u_n \in H^1_r(M) & \Rightarrow u_n \in L^s(M) \mbox{ } \forall \mbox{ } s \in [2, \frac{2n}{n - 2}).
\end{align*}
Passing to a subsequence if necessary and without changing the notation, $u_n \rightarrow u \in H^1_r(M)$ weakly implies, by the compactness of Sobolev embedding,
\begin{eqnarray}\label{st}
u_n \longrightarrow u \mbox{ in } L^s(U)-\mbox{norm} \mbox{ for all relatively compact } U.
\end{eqnarray}
Also, using (\ref{c}), using (\ref{st}) with very large $U$'s and the fact that $u_n, u$ vanish at infinity (this is Lemma \ref{Cond}) , we have 
\begin{eqnarray}\label{up}
u_n \longrightarrow u \mbox{   in   } L^s(M\setminus U)-\mbox{norm},
\end{eqnarray}
meaning, %, finally that 
%\begin{eqnarray}
%\V u\V^{p + 1}_{L^{p + 1}} = \beta,
%\end{eqnarray}
%which gives
$\V u_{k + q} - u_k\V_{L^s}$ is small beyond some integer $k$, for all positive integers $q$.\newline
%Now, since these $u_n$'s give constrained minimisers, they are actually solutions to the auxiliary NLS's, which are elliptic for all $X_n$. Since $u_{k + q} - u_k \in H^1_r(M)$, they vanish at infinity, which means they have small $L^\infty$ norm outside a finite ball of big enough radius (clearly, the radius of this ball might actually depend on the chosen value of $k$). \newline
%Inside a relatively compact ball, we can apply the Moser estimates to bound $\V u_{k + q} - u_k\V_{L^\infty}$ by the $L^2$-norm bounds of $(u_{k + q} - u_k)$. But we are through, as having a control on $L^{p + 1}$-norm of $(u_{k + q} - u_k)$ will give a control on the $L^2$ norm on $(u_{k + q} - u_k)$ inside a relatively compact ball by Sobolev embedding. So, ultimately, we have argued that with small perturbations of the Killing fields in the sup norm, we get small variations in the constrained minimisers in the sup norm.

\end{proof}
\subsection{Acknowledgements}
I would like to thank my Ph.D. advisor Michael Taylor for introducing me to this project and also guiding me throughout with academic advice. I am also indebted to Patrick Eberlein for several illuminating discussions on geometry and to Jeremy Marzuola for kindly going through a draft copy of this paper. 
\section{Appendix}
Here we justify our claim that on $S^n$, $-L_{2\lambda} = -\Delta + X^2 + 2i\lambda X$ loses one derivative when $|\lambda| < \frac{n - 1}{2}$, i.e., $L_{2\lambda}u \in H^s_{\text{loc}} \implies u \in H^{s + 1}_{\text{loc}}$. To prove this, we invoke Theorem 1.8, Chapter XV of ~\cite{T2}. Let $P = p(x, D) \in OPS^2$ have an expansion into homogeneous terms as follows:
\[
p(x, \xi) \sim p_2(x, \xi) + p_1(x, \xi) + ..... \]
Define the subprincipal symbol of $P$ as follows:
\[
\text{sub }\sigma(P) = p_1(x, \xi) + \frac{i}{2}\sum_\nu \frac{\pa^2}{\pa x_\nu\pa \xi_\nu}p_2(x, \xi).\]
Also, let $\Sigma = \{(x, \xi) : p_2(x, \xi) = 0\}$. Let $Q = Q_{(x, \xi)}$ denote the Hessian of $p_2(x, \xi)$ at a point $(x, \xi) \in \Sigma$. $Q = Q(u)$ is a quadratic form on $T(T^*(X))$, and we denote the associated bilinear form by $Q(u, v)$. If $\sigma$ denotes the symplectic form on $T^*(X)$, we define the associated Hamilton map $F$ by
\[
\sigma (u, Fv) = Q(u, v).\]
It can be proved that the nonzero eigenvalues of $F$ are of the form $\pm i\mu_\nu (\mu_\nu > 0),$ and call $tr^+ F := \sum \mu_\nu$.\newline
Now we can quote the aforementioned theorem:
\begin{theorem}(~\cite{T2}) Suppose $p_2(x, \xi) \geq 0$ vanishes to exactly second order on $\Sigma$, with $T\Sigma$ not involutive at each point of $\Sigma$, and suppose 
\beq\label{ebar}
\text{Re sub }\sigma (P) + tr^{+}F > 0
\eeq
holds. Then $P$ is hypoelliptic with loss of exactly one derivative.
\end{theorem} 
From the above statement, observe that it is immediately clear that $L_0$ loses exactly one derivative, because (\ref{ebar}) is trivially satisfied in that case.\newline 
The principal symbol of $L_{2\lambda}$ is given by 
\[
p_2(x, \xi) = \sum_{i = 2}^{n}\xi_i^2 + \sum_{i < j}(x_i\xi_j - x_j\xi_i)^2\]
The characteristic variety is 
\[\Sigma = \{((x_1, x_2,...,x_n); (0, 0,..., 0))\}\cup\{((x_1, 0,..., 0); (\xi_1, 0,..., 0))\}.
\]
So, 
\[
\text{dist }(((x_1, 0,.., 0); (\xi_1, \xi_2,..., \xi_n)), \Sigma)^2 = \sum_{j = 2}^n \xi_j^2 \leq p_2(x, \xi),
\]
which shows that $p_2(x, \xi)$ vanishes to exactly second order on $\Sigma$.\newline
For the rest of the calculation, we specialize to $n = 2$ for ease of presentation, because the general case is exactly analogous. We already have $\text{Re sub }\sigma (P) = -2\lambda\xi_1$. We can see that
\[
Q = \left( \begin{array}{cccc}
0 & 0 & 0 & 0 \\
0 & 2\xi_1^2 & 0 & -2x_1\xi_1\\
0 & 0 & 0 & 0\\
0 & -2x_1\xi_1 & 0 & 2x_1^2 + 2
\end{array}\right),\]
which gives on calculation that 
\[
F = \left( \begin{array}{cccc}
0 & 0 & 0 & 0 \\
0 & -2x_1\xi_1 & 0 & 2x_1^2 + 2\\
0 & 0 & 0 & 0\\
0 & -2\xi_1^2 & 0 & 2x_1\xi_1
\end{array}\right),\]
Calculating the eigenvalues of $F$, we see that $tr^{+}F = 2\vert\xi_1\vert$. So, if $|\lambda| < 1/2$, we have that 
\[
\text{Re sub }\sigma (P) + tr^{+}F > 0.
\]
\bibliographystyle{plain}
\def\noopsort#1{}
%\begin{thebibliography}{11}

\end{document}